\theoremstyle{definition}
\newtheorem{theorem}{Theorem}
\theoremstyle{definition}
\newtheorem{lemma}{Lemma}
\newtheorem{prop}[lemma]{Proposition}
\newtheorem*{theorem*}{Theorem}
\newtheorem*{remark}{Remark}
\let\emptyset\varnothing
\let\originaliota\iota
\renewcommand{\iota}{\dot\originaliota}
\Large\color{black}}
\Large\color{black}}
\Large\color{black}}
\newcommand{\monthyear}{\ifcase \month \or January\or February\or March\or %
April\or May \or June\or July\or August\or September\or October\or November\or 
December\fi, \number \year}
\title{Differentials on Forested and Hairy Graph Complexes with Dishonest Hairs}
\author{Nicolas Grunder \thanks{I am very grateful for any potential corrections, remarks, comments, and additions. \\E-mail address: nigrunder@student.ethz.ch.}}
\date{}
\begin{document}

\maketitle

\begin{center}\large Abstract \end{center} We study the cohomology of forested graph complexes with ordered and unordered hairs whose cohomology computes the cohomology of a family of groups $\Gamma_{g,r}$ that generalize the (outer) automorphism group of free groups. We give examples and a recipe for constructing additional differentials on these complexes. These differentials can be used to construct spectral sequences that start with the cohomology of the standard complexes. We focus on one such sequence that relates cohomology classes of graphs with different numbers of hairs and compute its limit.

\section{Introduction}
Graph complexes are combinatorial algebraic objects that can be connected to different fields of mathematics. These complexes come in many different flavors, defined in specific cases as it seems appropriate. In this paper, we mainly study two variants of graph complexes. In hairy graph complexes $HG_n$ for $n=0,1$ graphs have ordered or honest hairs and unordered or dishonest hairs. In forested graph complexes $FG_n$ for $n=0,1$ graphs have additionally the data of a marked subset of edges that form a forest. The definitions of these complexes are given in section \ref{section complexes}. 
\begin{figure}[h]
    \centering
    \includegraphics[scale = 0.35]{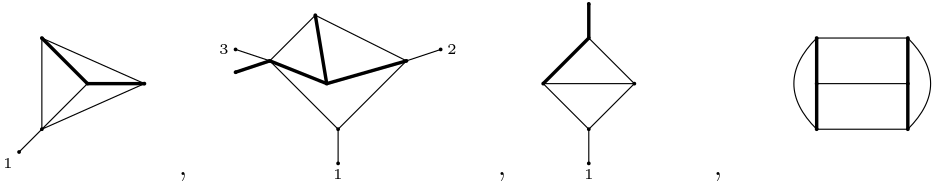}
    \caption{Examples of admissible graphs in $FG_n$.}
    \label{fig:Graph examples}
\end{figure}

In most literature graphs in graph complexes have either unordered or ordered hairs, in this paper, we slightly enlarged the complexes such that graphs are allowed to have both hair types \cite{DGC2,BrunWillwacher}. In \cite{DGC,DGC2} undecorated and hairy graph complexes have been studied in a very similar fashion as we will do here. The main new contribution in this paper is a result for the forested graph complexes. The cohomology of subcomplexes $FG_n^{g-loop}(r,0)$ of graphs of loop order $g$ with $r$ ordered hairs and without unordered hairs compute the cohomology of a family of groups $\Gamma_{g,r}$ (see \cite{HatcherVogtmann}) that generalize the outer automorphism group of free groups $\Gamma_{g,0} = Out(F_g)$ and the automorphism groups of free groups $\Gamma_{g,1} = Aut(F_g)$ for $2g+r\geq 3$ (see \ref{Prop AutoGroups} for precise statement), cf. \cite{BrunWillwacher,ContantKassabovVogtmann,ContantVogtmann}.

We will introduce a very simple differential $\delta_h$ on forested and hairy graph complexes. The map $\delta_h$ attaches an unordered hair to vertices. As it is, the sum of this differential with the standard differential $\delta+\delta_h$ only defines a differential on $HG_n$ but not on $FG_n$. In section \ref{section differentials} we extend $\delta_h$ to $$D= \exp(\Lambda)\delta_h\exp(-\Lambda) = \delta_h+\delta_{h,e}$$
such that $(\delta+D)^2 = 0$ on $FG_n$. The differential $\delta_{h,e}$ attaches unordered hairs to unmarked edges (Fig. \ref{fig: diff h,e}).
\begin{figure}[h!]
    \centering
    \includegraphics[scale = 0.2]{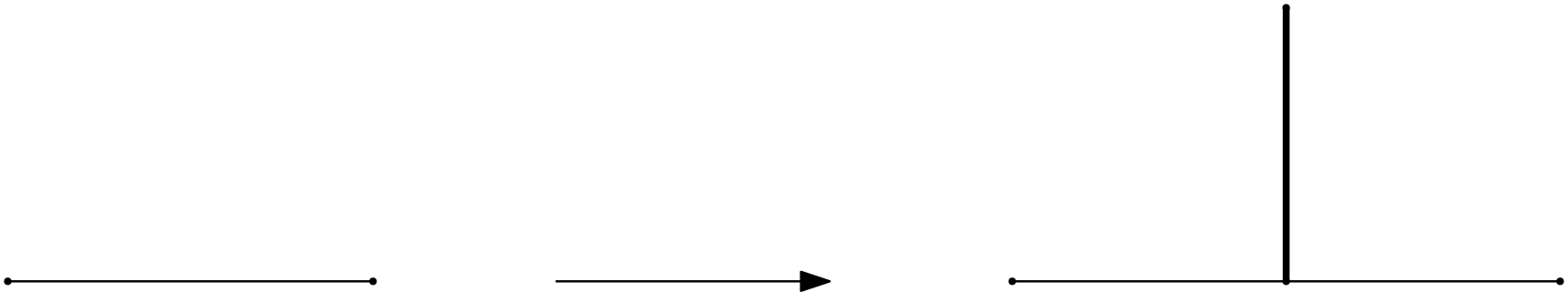}
    \caption{Action of $\delta_{h,e}$.}
    \label{fig: diff h,e}
\end{figure}

We obtain the following results by filtering the complex with differential $\delta+D$ by the number of hairs.
\begin{theorem}\label{Thm FG}
    For each $n= 0,1$ there is a spectral sequence with term $$E_1 = H(FG_n,\delta_s+\delta_m)$$
       that converges to the graded vector space $$H_n = \prod_{\substack{ k\geq 1\\ k \equiv (-1)^n \text{ mod }4}}HH^k.$$
\end{theorem}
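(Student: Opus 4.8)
The plan is to run the spectral sequence of the filtration by the number of hairs, read off $E_1$, and identify the abutment with $H(FG_n,\delta+D)$, which I would then compute by hand.

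\emph{The filtration and $E_1$.} Let $F^{\ge p}FG_n\subseteq FG_n$ be spanned by the graphs carrying at least $p$ hairs. The differential $\delta+D$ splits into the standard differential $\delta=\delta_s+\delta_m$, which leaves the number of hairs unchanged, and the part $\delta_h+\delta_{h,e}$, which raises it by exactly one; hence $\delta+D$ preserves $F^{\ge p}FG_n$ and $F^{\ge\bullet}FG_n$ is a decreasing, exhaustive filtration by subcomplexes. In each fixed cohomological degree the filtration is bounded (or at least complete and exhaustive, with vanishing $\varprojlim^1$, in the version of $FG_n$ that allows arbitrarily many hairs), so the spectral sequence converges; the infinite product in the statement records that $HH^k\ne 0$ for the infinitely many admissible $k$, contributing in arbitrarily high degrees. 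The $p$-th associated graded is the complex of graphs with exactly $p$ hairs equipped with the hair-preserving part of the differential, that is $\delta_s+\delta_m$, so $E_0\cong(FG_n,\delta_s+\delta_m)$ as bigraded complexes and hence $E_1=H(FG_n,\delta_s+\delta_m)$, with $d_1$ induced by $\delta_h+\delta_{h,e}$.

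\emph{The limit.} Since everything in sight is a $\mathbb Q$-vector space, it remains to identify $H(FG_n,\delta+D)$ with the stated product. I would use that $D=e^{\Lambda}\delta_h e^{-\Lambda}$: conjugation by $e^{-\Lambda}$ is an isomorphism of complexes
\[
(FG_n,\delta+D)\ \xrightarrow{\ \sim\ }\ (FG_n,\ \widetilde\delta+\delta_h),\qquad \widetilde\delta:=e^{-\Lambda}\delta e^{\Lambda},
\]
trading $D$ for the elementary operation $\delta_h$ = ``attach one unordered hair to a vertex''. Then I would compute $H(FG_n,\widetilde\delta+\delta_h)$ by an explicit contraction, the engine being: on a fixed graph $G$ the unordered-hair decorations form $\wedge^{\bullet}$ or $\operatorname{Sym}^{\bullet}$ of the free module on the internal vertices (depending on the parity of $n$), and $\delta_h$ acts as exterior, resp.\ symmetric, multiplication by the sum of all vertices, whose cohomology is concentrated in ``codimension one'' and vanishes once $G$ is saturated with hairs. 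Feeding this vertex-wise homotopy into a perturbation-lemma argument that also absorbs the zig-zag for $\widetilde\delta$ retracts $(FG_n,\widetilde\delta+\delta_h)$ onto a small subcomplex supported on the most degenerate graphs — concretely those of loop order one, the wheels — which one then identifies with a complex computing $\bigoplus_k HH^k$. The congruence $k\equiv(-1)^n\bmod 4$ should emerge from the sign with which the cyclic symmetry of a $k$-wheel acts on its combined orientation-and-hair datum.

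\emph{Main obstacle.} The first step is bookkeeping; the real work is the second. Two points are delicate. First, after conjugating one must control $\widetilde\delta=e^{-\Lambda}\delta e^{\Lambda}$: the terms it adds beyond $\delta$ must be small enough with respect to the filtration used for the contraction that the perturbation lemma applies and the auxiliary spectral sequence collapses onto the wheel subcomplex instead of contributing further differentials. Second, the sign-and-symmetry computation that isolates exactly the residues $1$ and $3$ mod $4$ — and not all odd $k$ — must be carried out with care, tracking the graph orientation, the ordering of the marked forest, and the antisymmetry of the unordered hairs at once. Getting both of these right, and making sure the unbounded hair filtration smuggles in no phantom classes, is where the argument will need to be most careful.
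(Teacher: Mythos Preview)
Your setup of the hair filtration and the identification of $E_1$ matches the paper exactly. The substantive divergence is in computing the abutment $H(FG_n,\delta+D)$.

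The conjugation step you propose does not work as written: the operator $\Lambda$ inserts valence-two vertices into unmarked edges, so $e^{\pm\Lambda}$ is only defined on the enlarged space $FG_{n,\ge 2}$, not on $FG_n$ itself. Your displayed isomorphism ``$(FG_n,\delta+D)\xrightarrow{\sim}(FG_n,\widetilde\delta+\delta_h)$'' therefore has the wrong target. To salvage the approach you would first have to identify $H(FG_n,\delta+D)$ with $H(FG_{n,\ge 2},\delta+D)$, conjugate there, and then run the perturbation argument on a complex that allows valence-two vertices --- extra work you have not accounted for, and no easier than working on $FG_n$ directly.

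The paper avoids conjugation entirely. It observes that a graph with two dishonest hairs at the same vertex is zero by symmetry for both parities, and hence that the decomposition $FG_n=FHG_n\oplus RFG_n$ into hedgehog graphs and graphs with at least one regular (non-critical internal) vertex is a splitting \emph{of complexes} for the full differential $\delta+D$. On $FHG_n$ the operators $\delta_s,\delta_h,\delta_{h,e}$ all vanish, so only $\delta_m$ remains, and its cohomology is computed by hand to be $H_n$; the congruence $k\equiv(-1)^n\bmod 4$ drops out of the dihedral symmetry of a maximally marked hedgehog. On $RFG_n$ the crucial device --- the idea you are missing --- is to filter by $\omega(G)=|E^{\mathrm{int}}\cap M|+\zeta(G)$, the number of marked internal edges plus the number of critical vertices: each of $\delta_m,\delta_s,\delta_{h,e}$ strictly raises $\omega$, while $\delta_h$ preserves it (attaching a hair at a critical vertex yields zero, and at a regular vertex keeps it regular). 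The associated graded thus carries only $\delta_h$, and the explicit homotopy ``remove one dishonest hair, divided by the number of regular vertices'' shows $H(RFG_n,\delta_h)=0$, so this auxiliary spectral sequence dies on $E_1$.

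Your perturbation instinct is morally right --- the endgame is indeed a $\delta_h$-acyclicity statement on the non-hedgehog part --- but the route through $e^{-\Lambda}$ is both technically broken on $FG_n$ and unnecessary. The $\omega$-filtration is the missing device.
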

The $HH^k$ denotes one-dimensional vector spaces corresponding to hedgehog graphs (Figure \ref{fig:FullHedgehog}) formed by a single loop with $\frac{k+1}{2}$ internal edges and dishonest hairs. The following result is not new but is proved analogously as \ref{Thm FG}.
\begin{theorem}[\cite{DGC2,TurchinWillwacher1,TurchinWillwacher2,Willwacher}]\label{Thm HG}
       For each $n= 0,1$ there is a spectral sequence with term $$E_1 = H(HG_n,\delta_s)$$
       that converges to $$H_n[-1] = \prod_{\substack{ k\geq 2\\ k-1 \equiv (-1)^n \text{ mod }4}}HH^k$$ the twisted complex by degree -1.
\end{theorem}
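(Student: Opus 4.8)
The plan is to reproduce, with two simplifications, the argument behind Theorem~\ref{Thm FG}. First, on the hairy complex the naive sum $\delta+\delta_h$ already squares to zero (as recalled in the introduction), so there is no need to conjugate by $\exp(\Lambda)$ and the correction term $\delta_{h,e}$ — which exists only so that unmarked edges of a forested graph can absorb dishonest hairs — never enters. Second, the model one eventually lands on carries no forest data, and this is what produces the shift $[-1]$ and the shifted congruence on $k$ relative to Theorem~\ref{Thm FG}. Concretely: equip $HG_n$ with the differential $\delta+\delta_h$ and filter by the number of hairs, $F_p=\langle\,\Gamma : \Gamma\text{ has at least }p\text{ hairs}\,\rangle$. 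Since the standard differential $\delta$, being built from edge contractions and vertex expansions, preserves the number of hairs (so $\delta=\delta_s$), while $\delta_h$ raises it by exactly one, the sum $\delta+\delta_h$ respects this decreasing filtration, the $E_0$-differential is $\delta_s$, and $d_1$ is induced by $\delta_h$. In each fixed loop order and each fixed cohomological degree only finitely many hair numbers — hence only finitely many isomorphism classes of graphs — occur, so the filtration is finite there; the spectral sequence therefore converges, with $E_1=H(HG_n,\delta_s)$ and abutment $H(HG_n,\delta+\delta_h)$.

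The real work is then to identify $H(HG_n,\delta+\delta_h)$ with $H_n[-1]$, and this is the step I expect to be the main obstacle. One runs a second, auxiliary filtration — by loop order, equivalently by (number of internal edges) $-$ (number of hairs) — and shows that on any connected graph which is not a single cycle with exactly one dishonest hair at each vertex, the hair-creating part of $\delta+\delta_h$ pairs with the hair-destroying/contracting part of $\delta$ into an acyclic ``add a hair / remove a hair'' complex on each combinatorial type. This is precisely the reduction carried out for the single-hair-type complexes in \cite{TurchinWillwacher1,TurchinWillwacher2,Willwacher} and revisited in \cite{DGC2}, and it transfers essentially verbatim here because $\delta_h$ and the relevant pieces of $\delta$ touch only the dishonest hairs, leaving the ordered hairs as inert labels. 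The delicate points are producing this contracting homotopy cleanly and making the two iterated filtrations cohere: since the abutment is an infinite product, one must know that in each cohomological degree only finitely many graphs contribute — the finiteness remark above — so that no ${\lim}^1$-term or extension problem can intervene. The conclusion of this step is that the cohomology is carried entirely by the loop-order-$1$ hedgehog graphs of Figure~\ref{fig:FullHedgehog}, i.e.\ by the hedgehog complex $HH^{\bullet}$.

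Finally one computes $H(HH^{\bullet})$ directly. A hedgehog with $m$ internal edges has a dihedral automorphism group acting on the cyclic ordering of its edges and on its dishonest-hair orientation, and the differential relating the $m$- and $(m+1)$-edge hedgehogs is the subdivide-an-edge-with-a-fresh-hair move coming from $\delta$ together with its reverse. This is the same hedgehog complex as in the proof of Theorem~\ref{Thm FG}, except that the forest edges are now absent, which suspends every degree by $-1$; the sign and parity count is otherwise identical and collapses $HH^{\bullet}$ to a one-dimensional space precisely in the degrees $k$ with $k-1\equiv(-1)^n\bmod 4$ and $k\geq 2$, and to zero otherwise. Taking the product over these $k$ gives $H_n[-1]$, as claimed; the spectral-sequence setup and the hedgehog count are routine once the corresponding ingredients from the proof of Theorem~\ref{Thm FG} are in hand, so that the whole argument is genuinely parallel to that of Theorem~\ref{Thm FG}.
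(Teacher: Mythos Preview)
Your first filtration (by number of hairs) is exactly what the paper does, and convergence is fine once one notes that the complex splits over the number $r$ of honest hairs, so that in each summand and each degree only finitely many dishonest-hair counts occur. The gap is in the second step, computing $H(HG_n,\delta_s+\delta_h)$.

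The auxiliary filtration you propose does not do the job. Both $\delta_s$ and $\delta_h$ preserve loop order, so filtering by loop order merely decomposes the complex without separating the two pieces of the differential; and the quantity $(\text{internal edges})-(\text{hairs})$ is not the loop order and is not respected by $\delta_s+\delta_h$ either ($\delta_s$ raises it by $1$, $\delta_h$ lowers it by $1$). More fundamentally, on $HG_n$ the differential $\delta=\delta_s$ has no ``hair-destroying/contracting'' part --- it only splits vertices --- so there is nothing in $\delta$ to pair against $\delta_h$ in the way you describe. The paper instead filters by the number of \emph{interior} edges: $\delta_s$ raises this by $1$ while $\delta_h$ preserves it, so the associated graded carries $\delta_h$ alone. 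One then computes $H(HG_n,\delta_h)$ directly: writing $HG_n=HHG_n\oplus RHG_n$ with $RHG_n$ spanned by graphs having at least one regular internal vertex, the homotopy ``remove one dishonest hair'' gives $h\delta_h+\delta_h h=\mathrm{id}$ on $RHG_n$, whence $H(HG_n,\delta_h)=HHG_n$, and a comparison of spectral sequences yields $H(HG_n,\delta_s+\delta_h)=HHG_n$. Finally, on hedgehogs \emph{every} differential vanishes --- a trivalent vertex cannot be split into two admissible ones, and $\delta_h$ lands in $RHG_n$ --- so there is no ``subdivide-an-edge-with-a-fresh-hair'' map between the $m$- and $(m{+}1)$-hedgehogs as you suggest; the identification $HHG_n=H_n[-1]$ is a pure dihedral-symmetry count.
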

\begin{figure}[h]
    \centering
    \includegraphics[scale = 0.5]{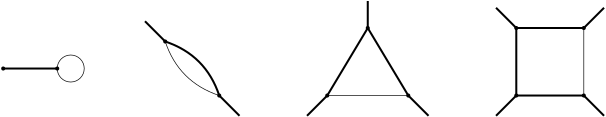}
    \caption{Graphs corresponding to $HH^k$.}
    \label{fig:FullHedgehog}
\end{figure}
These spectral sequences show that most of the homology classes come in pairs, represented by graphs with a different number of unordered hairs.
As will be clear after the definition of the graph complexes $FG_n$ and $HG_n$ in section \ref{section complexes}, there are subcomplexes $FG_n(r,k)$ and $HG_n(r,k)$ with $r$ honest hairs and $k$ dishonest hairs, let $C(r,k)$ denote either one of them. Then $HG_n$ or $FG_n$ given by a product of subcomplexes $C(r,k)$ for $r,k\in \mathbb Z_{\geq 0}$. The spectral sequences in Theorem \ref{Thm FG} and Theorem \ref{Thm HG} relate the homologies of $H(C(r,k))$ for different $k\geq 0$. To any homology class in $H(C(r,k))$ that is not a hedgehog graph (defined in section \ref{section spec seq}) a different homology class in $H(C(r,l))$ can be assigned to form pairs that cancel in the spectral sequences. The hedgehog graphs survive to $E_\infty$ and correspond to elements in $H_n$ or $H_n[1]$. The spectral sequence shows how one can construct new nontrivial homology classes out of known nontrivial homology classes. 

\subsection{Structure of the paper} In section \ref{section complexes} we define the complexes $HG_n$ and $FG_n$ by first defining a larger complex $MG_n$ whereof they will descend. In section \ref{section differentials} we introduce three additional differentials on $FG_n$ and $HG_n$. Then we will concentrate on one of them in section \ref{section spec seq} and use it to construct the spectral sequence from Theorem \ref{Thm FG} and \ref{Thm HG}.
\section{Definition of $MG_n,$ $HG_n,$ and $FG_n$} 
\label{section complexes}
We mainly analyze two types of graph complexes. Complexes of simpler type are called hairy graph complexes and are a vector space spanned by series of graphs $G=(V,E)$ where any vertex is either univalent or of valence greater than two. Edges connected to univalent vertices are called hairs and come in two types. A subset of these hairs, called honest hairs, are ordered while the other hairs, called dishonest hairs, are not ordered. Two hairy graphs are equivalent if there is a graph isomorphism that respects the order of honest hairs. The graph complexes come in two flavors, we denote them by $HG_n$ for $n = 0,1$. For $n=0$ edges are oriented and for $n=1$ halfedges and vertices are oriented. Permuting oriented objects of a graph is identified with an additional sign given by the permutation. We identify two graphs with opposite orientation by an additional sign. We grade this vector space by the number of edges of graphs. To form a complex, we define a map $\delta_s$ of degree 1 that squares to zero. On graphs, the map $\delta_s$ acts by summing over all possible ways of splitting a vertex into two vertices of valence greater than two connected by an edge (Fig. \ref{im: diff s}). 
\begin{figure}[h]
    \centering
    \includegraphics[scale = 0.4]{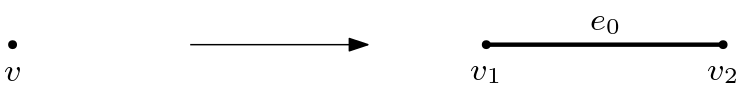}
    \caption{The action of $\delta_s$ on vertices.}
    \label{im: diff s}
\end{figure}

The second type of complexes are called complexes of forested graphs and are closely related to the complexes of hairy graphs. There are again two types of these complexes, denoted by $FG_n$ for $n=0,1$. They are spanned by series of graphs that only have univalent vertices or vertices of valence greater than two. Similarly, there are ordered honest hairs and unordered dishonest hairs. We endow these graphs with additional data of marked edges that form a forest $M\subset E$ called marking. The marking must be such that honest hairs are not marked and dishonest hairs are marked. For $n=0$ we orient marked edges while for $n=1$ we orient unmarked edges, half-edges, and vertices. Permuting oriented objects is identified with an additional sign of the permutation.  We grade this vector space by the number of marked edges. The differential on forested graph complexes is defined by $\delta_s +\delta_m$ where $\delta_s$ acts on graphs by splitting vertices into a marked edge and $\delta_m$ sums over all possible ways of marking an edge (Figure \ref{fig:diff m}).  \\

\begin{figure}[h]
    \centering
    \includegraphics[scale = 0.6]{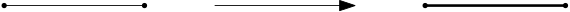}
    \caption{The action of $\delta_m$.}
    \label{fig:diff m}
\end{figure}

To define the complexes more formally we will first define a large complex $MG_n$ of which the complexes $HG_n$ and $FG_n$ will descend naturally. \\
We say a graph $G = (V,E)$ is \emph{admissible} if any vertex is univalent or has valence greater than two. A priori, tadpoles and multiple edges are allowed. Univalent vertices are called \emph{leaves} and edges attached to leaves are called \emph{hairs}. Vertices that are not leaves are called \emph{internal}, the set of internal vertices is denoted by $V^{int}\subset V$. Edges that are not hairs are called interior edges. The interior edges are denoted by $E^{int} \subset E$. Any graph is given the data of a subset of hairs denoted by $HR$ called \emph{honest hairs} and an ordering of honest hairs $\xi:|HR|\to HR$. Hairs that are not honest are called \emph{dishonest hairs}. The set of dishonest hairs is denoted by $DHR$. A \emph{marking} is a subset $M\subset E$ such that $DHR\subset M$ and $HR\cap M = \emptyset$, i.e. dishonest hairs must be marked and honest hairs must not be marked. An \emph{admissible triple} is given by $(G,M,\xi)$ where $G$ is a graph, $M$ is the marking, and $\xi: |HR| \to HR$ is an ordering of the honest hairs.\\
The definitions of the complexes $HG_n$ and $FG_n$ depend on an integer $n = 0,1$. We define two notions of orientations of an admissible triple $(G,M,\xi)$ corresponding to $n = 0,1$. For $n = 0$ an orientation of a triple $(G,M,\xi)$ is an element $$or(G,M) = or(G) = x_1\wedge...\wedge x_{|M|}\in \bigwedge\mathbb Q^{M}.$$
That is, marked edges are oriented.

For $n=1$ the definition of an orientation is more complicated. A \emph{half edge} in a graph is a pair $(v,e) \in V \times E$ such that $e$ is attached to $v$. Let us denote the half edges by $HE$. Then an orientation of an admissible pair is an element $$ or(G,M) = or(G) = x_1\wedge ...\wedge x_{|V\cup E-M\cup HE|} \in \bigwedge \mathbb Q^{V\cup E-M\cup HE}.$$
That is vertices, unmarked edges, and half edges are oriented. \\
Two admissible triples $(G,M,\xi)$ and $(G',M',\xi')$ are isomorphic if there is a graph isomorphism $\Phi: G\to G'$ that induces a bijection of the marking $M \to M'$ and preserves the ordering of honest hairs $\Phi\circ \xi' = \xi$. Admissible triples are orientation isomorphic if the isomorphism is orientation-preserving. Let $MG_n^{k,l}$ denote the vector space spanned over $\mathbb Q$ by orientation isomorphism classes of admissible triples with $k$ vertices and $l$ marked edges, where additionally we identify $\lambda(G,or(G)) = (G,\lambda or(G))$ for $\lambda\in \mathbb Q$. By abuse of notation, we write $G$ for the triple $(G,M,\xi)$ and $(G,or(G))$ for the orientation isomorphism classes. In the rest of the paper, not admissible graphs are identified with zero. The \emph{space of marked graphs} is assembled in the following way $$MG_n = \prod_{k,l}MG_n^{k,l}.$$
This space, with elements that are series of graphs, is graded by the number of marked edges. For the sake of simplicity, we will always refer to elements of $MG_n$ with the least amount of information necessary in the specific case. For example, if the marking is not important at the moment, we might write $(G,or(G))$ rather than $(G,M,\xi,or(G))$.  \\ \\
In the following, we define differentials $\delta$ on $MG_n$ that are given by the sum of two maps $\delta = \delta_s+\delta_m$. The simpler map of the two is $\delta_m$. 
On graphs with orientation $((G,M),or(G)) \in MG_n$ we define $$\delta_m((G,M),or(G)) = \sum_{e\in E-(M\cup HR)} ((G,M\cup\{e\}),\delta_m^e or(G)).$$
The definition of $\delta_m^e or(G)$ differs for $n=0$ and $n=1$. Namely, for $n = 0$ we simply say $$\delta_m^eor(G) = e\wedge or(G).$$
For $n = 1$ we have that $e$ is in the orientation $or(G)$. When $or(G) \neq 0$ we set $$d_e or(G) = d_e( e\wedge x_1 \wedge ... \wedge x_l) =  x_1\wedge...\wedge x_l$$
leaving out $e$ where $l = {|V\cup E-(M\cup \{e\}) \cup HE|}$. Further, we set $d_e 0 = 0$. Note that $$d_{e_1}d_{e_2} or(G) = - d_{e_2}d_{e_1} or(G).$$ We set $$\delta_m^eor(G) = d_e or(G).$$
Before defining $\delta_s$, we define a map $\Delta$ of degree $0$. This map corresponds to splitting a vertex in an unmarked edge. Let $G$ be an admissible graph with marked edges, $v$ a vertex of $G$, and $H$ be a graph with marked edges. By \emph{graphs obtained by inserting $H$ into $G$ at a vertex $v$ of $G$} we mean graphs that can be constructed as follows. First consider $G\cup H$ and remove $v$ from $G$. Now reconnect edges once connected to $v$ to some vertex in $H$ such that the resulting graph is admissible and such that no vertex corresponding to a vertex of $H$ is univalent. \\
 Let $H$ be the graph given by two vertices $v_1,v_2$ and an unmarked edge $e_0$ connecting them. Let $G$ be an admissible graph with marked edges and let $v\in V$. Set $S_v(G)$ to be the set graphs that can be obtained by inserting $H$ into $v$. Then we let $$\Delta(G,or(G)) = \sum_{v\in V} \sum_{ H\in S_v(G)}(H, \Delta^vor(G)).$$
The orientation $\Delta^vor(G)$ differs for $n = 0 $ and $n=1$. For $n = 0$ the orientation remains unchanged $$\Delta^vor(G) =  or(G).$$ For $n = 1$ we set $$\Delta^vor(G) = v_1\wedge (v_1,e_0)\wedge e_0\wedge (v_2,e_0)\wedge v_2 \wedge d_vor(G).$$
The differential $\delta_s$ is now given by the commutator of $\Delta$ and $\delta_m$. That is $$\delta_s = [\delta_m,\Delta] = \delta_m\Delta-\Delta\delta_m.
$$
\begin{figure}[h]
    \centering
    \includegraphics[scale =0.4]{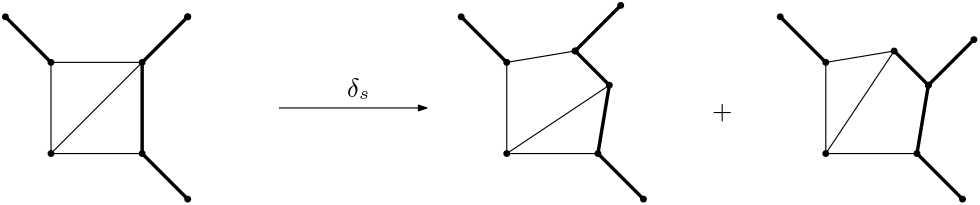}
    \caption{Example of $\delta_s$.}
    \label{fig:diff s example}
\end{figure}
\begin{remark}
    An alternative but equivalent definition of $\delta_s$ is the following. Let $H$ be the same graph as before but with a marked edge $e_0$. Let $G$ be a graph and let $S_v(G)$ denote the set of graphs obtained by inserting $H$ into $G$ at $v$. Define $$\delta_s(G,or(G)) = \sum_{v\in V} \sum_{H \in S_v(G)}(H, \delta_s^vor(G)).$$
    For $n = 0$ we define $$\delta_s^v or(G) = e_0\wedge or(G).$$ For $n= 1$ let $$\delta_s^vor(G) = v_1\wedge (v_1,e_0)\wedge (v_2,e_0)\wedge v_2 \wedge d_vor(G).$$
\end{remark} $ $\\
One can check that $\delta_m, \delta_s,$ and $\delta_s+\delta_m$ are differentials, i.e. square to zero. As a vector space $MG_n$ is splits into a product of subspaces \begin{align*}MG_n = \prod_{k=0}^\infty \frac{C_kMG_n}{C_{k+1}MG_n}\end{align*} where $C_kMG_n$ is the subspace spanned by series of graphs with marking $M$ such that the first homology $H_1(M;\mathbb Q)$ has dimension at $k$. In this sense, $FG_n = {\frac{C_1MG_n}{C_1MG_n}}$. The spaces $\frac{C_kMG_n}{C_{k+1}MG_n} $ are complexes by themselves and inherit the differential $\delta_s+\delta_m$. \\
Finally, we can define $HG_n$ and $FG_n$. Namely, $(HG_n,\delta_s) =(HG_n,\delta_s+\delta_m)$ is the subcomplex of $(MG_n,\delta_s+\delta_m)$ spanned by series of graphs where all edges that are not honest hairs are marked. Let $CMG_n$ be the subspace spanned series of graphs $(G,M)$ with $H_1(M;\mathbb Q)$ being at least one-dimensional. Note that $\delta_s$ and $\delta_m$ leave $CMG_n$ invariant. Hence, $$FG_n  = \faktor{MG_n}{CMG_n}$$ is well-defined as a complex.
Although technically elements in $FG_n$ are only defined up to addition with elements in $CMG_n$, we still refer to them as graphs. More precisely, they are graphs where the marked edges form a forest.

We are interested in the homologies of $FG_n$ and $HG_n$ which are highly non-trivial. In contrast, the homology of $MG_n$ is zero. \begin{prop}\label{Thm MG}
    We have $$H(MG_n,\delta_s+\delta_m) = H(MG_n,\delta_m) = 0.$$
\end{prop}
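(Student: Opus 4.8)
The plan is to prove the two vanishing statements in turn, obtaining the second from the first. The guiding observation is that $\delta_m$ is a Koszul differential ``in the interior-edge directions'', and therefore admits an explicit contracting homotopy which is automatically invariant under isomorphisms of admissible triples.

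\emph{Vanishing of $H(MG_n,\delta_m)$.} First I would note that $\delta_m$ changes only the marking of a graph, and only by adjoining an interior edge; in particular it fixes the underlying graph together with its honest hairs and their ordering, hence also the set $E^{int}$ of interior edges. For a fixed such datum, the span of the associated oriented triples is, for $n=0$, the exterior algebra $\bigwedge\mathbb{Q}^{E^{int}}$ — the marked interior edges being precisely the generators recorded in $or(G)$, while the (always marked) dishonest hairs contribute a fixed inert factor — and $\delta_m$ is left multiplication by $\sum_{e\in E^{int}}e$; for $n=1$ one gets the dual picture, with the unmarked interior edges recorded in $or(G)$ and $\delta_m$ equal to contraction with $\sum_{e\in E^{int}}e^{*}$. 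In either case this is the classical Koszul complex of $\mathbb{Q}^{E^{int}}$, which is acyclic as soon as $E^{int}\neq\emptyset$. The homotopy is the averaged un-marking operator $h=\frac{1}{|E^{int}|}\sum_{e\in E^{int}}h_e$, where $h_e$ removes $e$ from (for $n=0$), resp.\ adjoins $e$ to (for $n=1$), the orientation if $e$ is currently marked, and is $0$ otherwise; equivalently $h$ is $\frac{1}{|E^{int}|}$ times contraction with $\sum e^{*}$, resp.\ wedging with $\sum e$. Because it sums symmetrically over $E^{int}$, $h$ commutes with isomorphisms of triples and so descends to a degree $-1$ operator on $MG_n$; the identity $\delta_m h+h\delta_m=\mathrm{id}$ is linear, so it may be checked on a single generator $((G,M),or(G))$, where — using that $\delta_m$ leaves $|E^{int}|$ unchanged — it is the usual Cartan computation in $\bigwedge\mathbb{Q}^{E^{int}}$. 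Since every graph $G$ occurring here has $E^{int}(G)\neq\emptyset$ (it has positive first Betti number, hence contains a cycle, hence an interior edge), $h$ is everywhere defined and $H(MG_n,\delta_m)=0$.

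\emph{Vanishing of $H(MG_n,\delta_s+\delta_m)$.} Here I would perturb $h$. Filter $MG_n$ by the number of vertices: $\delta_m$ and $h$ both preserve it, whereas $\delta_s$, which splits one vertex into two, raises it by exactly one. Hence $\delta_s h$ raises the vertex count, and since $MG_n=\prod_{k,l}MG_n^{k,l}$ is a \emph{product} over the number $k$ of vertices, the series $H:=\sum_{j\geq 0}h(-\delta_s h)^{j}$ defines an honest operator on $MG_n$ (only finitely many summands hit each $MG_n^{k,l}$). A direct telescoping computation using only $\delta_m h+h\delta_m=\mathrm{id}$ and $\delta_m^{2}=\delta_s^{2}=0$ (equivalently $\delta_m\delta_s+\delta_s\delta_m=0$), i.e.\ the basic perturbation lemma, then gives $(\delta_s+\delta_m)H+H(\delta_s+\delta_m)=\mathrm{id}$, whence $H(MG_n,\delta_s+\delta_m)=0$. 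The same idea can be packaged as the spectral sequence of the vertex-number filtration, whose $E_1$ page is $H(MG_n,\delta_m)=0$ and which converges because that filtration is complete, Hausdorff and exhaustive; the explicit $H$ simply lets one avoid any convergence discussion.

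The one real obstacle is the role of graphs all of whose edges are hairs (stars, and the single edge), on which $E^{int}=\emptyset$ and $h$ collapses: these would genuinely represent nonzero $\delta_m$-classes, and the proposition holds precisely because such graphs do not occur here — positivity of the first Betti number forces $E^{int}\neq\emptyset$. Apart from that, the two points needing a little care are writing $h$ symmetrically so that it descends to the orientation-isomorphism classes spanning $MG_n$, and checking that the series for $H$ converges in the product topology on $MG_n$; both are handled as above, and what remains is the routine sign bookkeeping of $\bigwedge\mathbb{Q}^{\bullet}$.
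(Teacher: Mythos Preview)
Your treatment of $H(MG_n,\delta_m)=0$ coincides with the paper's: the averaged un-marking operator is exactly the contracting homotopy the paper writes down, and both arguments need $|E^{int}|>0$ for the factor $1/|E^{int}|$ to make sense. (You justify this via positive first Betti number; the paper never states such a hypothesis, and star graphs with only hairs would on a literal reading survive --- but the paper's homotopy has the identical blind spot, so this is a lacuna in the statement rather than in your argument.)

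Where you genuinely diverge is in the passage from $\delta_m$ to $\delta_s+\delta_m$. The paper exploits the specific origin $\delta_s=[\delta_m,\Delta]$: one checks $[[\delta_m,\Delta],\Delta]=0$, whence
\[
\exp(-\Delta)\,\delta_m\,\exp(\Delta)=\delta_m+\delta_s,
\]
so $\exp(\Delta)$ is an explicit isomorphism of complexes $(MG_n,\delta_m)\to(MG_n,\delta_m+\delta_s)$ and the second vanishing is literally the first. Your route --- the perturbed homotopy $H=\sum_{j\ge0}h(-\delta_s h)^j$, or equivalently the spectral sequence of the vertex-number filtration --- is correct and more robust: it uses only that $\delta_s$ anticommutes with $\delta_m$ and strictly raises the vertex count in the product $\prod_{k,l}MG_n^{k,l}$, not the finer identity $\delta_s=[\delta_m,\Delta]$. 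The paper's argument buys an explicit chain isomorphism and a structural explanation for why the two differentials have the same cohomology; yours buys generality (it would apply to any anticommuting perturbation of $\delta_m$ that is small in the vertex filtration) at the cost of that extra information.
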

\begin{proof}
First, we show that $H(MG_n,\delta_s+\delta_m) = H(MG_n,\delta_m)$. We have $$\delta_m\Delta\Delta+\Delta\Delta\delta_m = 2 \Delta \delta_m\Delta$$ that is $$[\delta_s,\Delta] = [[\delta_m,\Delta],\Delta] = 0.$$
Thus, we have that $$\exp(-\Delta) \delta_m \exp(\Delta) = \delta_m+\delta_s.$$
We can conclude that $\exp(\Delta)$ is an isomorphism of complexes between $(MG_n,\delta_s +\delta_m)$ and $(MG_n,\delta_m).$
It is left to show that $H(MG_n,\delta_m) = 0.$
Let $h:MG_n\to MG_n[-1]$ be the map that unmarks edges. That is, for admissible graphs $((G,M),or(G))$ we define $$h((G,M),or(G)) = \frac{1}{|E^{int}|}\sum_{e\in M\cap E^{int}}(G,M-{e},h^e(or(G)))$$
where for $n=0$ we let $$h^e(or(G)) = d_eor(G)$$ and for $n=1$ we let $$h^e(or(G)) = e\wedge or(G).$$
Then we have that $$h\delta_m-\delta_m h =  id.$$
Thus, the homology $H(MG_n,\delta_m) = 0$.
\end{proof}
The following result relates the homologies of the above-defined spaces $\frac{C_kMG_n}{ C_{k+1}MG_n}$. 
\begin{prop}\label{Thm CMG}
    For each $n= 0,1$ there is a spectral sequence with $$E^k_1 = H(\frac{C_kMG_n}{ C_{k+1}MG_n},\delta_s+\delta_m) $$ that converges to 0. Here $E^k_1$ denotes the $k$-th row of the first page of the spectral sequence. In particular, we have $$E^0_1 = H(FG_n,\delta_s+\delta_m).$$
\end{prop}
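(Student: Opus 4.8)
The plan is to build the spectral sequence from the increasing filtration of $MG_n$ by the rank of the first homology of the marking, and then to feed in Proposition~\ref{Thm MG}.

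\textbf{Setting up the filtration.} First I would record that the subspaces $C_kMG_n \subset MG_n$ (spanned by series of graphs whose marking $M$ satisfies $\dim H_1(M;\mathbb Q)\geq k$) form a decreasing filtration $MG_n = C_0MG_n \supseteq C_1MG_n \supseteq C_2MG_n \supseteq \cdots$ that is preserved by $\delta_s$ and $\delta_m$. For preservation, note that marking an edge (the map $\delta_m$) can only increase $\dim H_1(M;\mathbb Q)$, since $M$ gains an edge; and splitting a vertex into a marked edge (the map $\delta_s$) leaves $\dim H_1(M;\mathbb Q)$ unchanged, because the new marked edge is a bridge in the enlarged marking and a subdivision/expansion does not change $H_1$. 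Hence both $\delta_s$ and $\delta_m$ send $C_kMG_n$ into itself, so the filtration is a filtration of complexes. The associated graded piece is exactly $\mathrm{gr}_k = C_kMG_n/C_{k+1}MG_n$, which is a complex under the induced differential $\delta_s+\delta_m$, and the standard machinery of a filtered complex produces a spectral sequence with $E_0^k = \mathrm{gr}_k$ and first page $E_1^k = H(\mathrm{gr}_k, \delta_s+\delta_m) = H\!\left(\frac{C_kMG_n}{C_{k+1}MG_n}, \delta_s+\delta_m\right)$, indexing rows by $k$. The $k=0$ row is $E_1^0 = H(FG_n,\delta_s+\delta_m)$ by the identification $FG_n = MG_n/CMG_n = MG_n/C_1MG_n = \mathrm{gr}_0$ made in the text.

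\textbf{Convergence.} Convergence is the point where I would need to be slightly careful, since $MG_n$ is an infinite product of its graded pieces $MG_n^{k,l}$ and the filtration is not bounded. I would argue convergence degreewise: in a fixed bidegree (fixed number of vertices and fixed number of marked edges, or after restricting to any connected component with fixed loop order and hair count), only finitely many of the $C_kMG_n$ contribute, because a graph with a fixed number of marked edges can have $H_1(M;\mathbb Q)$ of rank at most that number; so in each fixed degree the filtration is finite, the spectral sequence degenerates at a finite page, and it converges to the associated graded of $H(MG_n,\delta_s+\delta_m)$ in that degree. By Proposition~\ref{Thm MG}, $H(MG_n,\delta_s+\delta_m)=0$, so the spectral sequence converges to $0$.

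\textbf{Main obstacle.} The one genuinely delicate step is the bookkeeping around the infinite product structure of $MG_n$ and making ``converges to $0$'' precise: one must check that completeness/exhaustiveness hold in the relevant degreewise sense so that the abutment really is the (trivial) homology of $MG_n$ and no spurious classes sneak in at $E_\infty$. Once it is observed that every fixed degree sees only finitely many filtration steps, this reduces to the classical convergence theorem for filtered complexes applied degreewise, and the vanishing $H(MG_n)=0$ then forces the abutment to vanish. The remaining statements --- that $E_1^k$ has the stated form and that $E_1^0 = H(FG_n,\delta_s+\delta_m)$ --- are immediate from the definitions of the associated graded and of $FG_n$.
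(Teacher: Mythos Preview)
Your proposal is correct and follows essentially the same route as the paper: filter $MG_n$ by $\omega(G,M)=\dim H_1(M;\mathbb Q)$, note that the degree (number of marked edges) bounds $\omega$ so the filtration is bounded in each degree, invoke the classical convergence theorem, and use $H(MG_n,\delta_s+\delta_m)=0$ from Proposition~\ref{Thm MG}. You are in fact a bit more explicit than the paper in checking that $\delta_s,\delta_m$ respect the filtration and in spelling out the degreewise boundedness; the only slip is the word ``increasing'' in your opening sentence, where you then correctly write the filtration as decreasing.
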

 \begin{proof}
For any graph $G$ with marking $M$ we let $\omega(G,M)$ denote the dimension of the homology of the marked edges dim$H_1(M;\mathbb Q)$. Then $C_kMG_n$ is spanned by series of graphs $(G,M)$ with $\omega(G,M)\geq k$. This induces a filtration of $MG_n$. We can construct a spectral sequence associated with this filtration with  $$E^k_1 = H(\frac{C_kMG_n}{ C_{k+1}MG_n},\delta_s+\delta_h).$$
 We have that deg$(G,M) \geq \omega(G,M)$ and can apply Proposition \ref{prop convergence}. Thus, the sequence converges to the total homology $H(MG_n,\delta_s+\delta_m) = 0$.
\end{proof}
\section{Additional Differentials}\label{section differentials}
In this section, we give a recipe for constructing differentials $D$ on $MG_n$ such that $\delta_s+\delta_m+ D$ is a differential. To achieve this, we pass to a larger complex $MG_n\subset MG_{n,\geq 2}$ of series of graphs where vertices with valence two are allowed. This complex is constructed like $MG_n$ but where any graph is admissible. On ${MG}_{n,\geq 2}$ a map that extends splitting a vertex on $MG_n$ can be defined in different ways. For now, it is defined as the author found it most natural, but in section \ref{Section Quasihair} it is defined slightly differently and the same following argument works. To this end, let $\Delta$ denote the map that splits vertices into two vertices of valence $\geq 3$ and let $\Delta_{\geq 2}$ denote the map that splits vertices into two vertices of valence $\geq 2$. Note that there is an inclusion $\iota: MG_n \hookrightarrow {MG}_{n,\geq 2}$ with $\Delta \iota = \iota\Delta$. Furthermore, marking edges in $MG_{n,\geq 2}$ is done as on $MG_n$ and yields a differential $\delta_m$. Naturally, we define $$\delta_s = [\delta_m,\Delta]$$ and $$\delta_{s,\geq 2} = [\delta_m,\Delta_{\geq 2}].$$ 
A subgraph of a graph $G$ is called a \emph{unmarked chain} if it is a sequence of unmarked edges connected by vertices of valence two in $G$. For any unmarked edge $e$ that is not a tadpole, we assign the \emph{chain length} $c(e)$ given by the number of the unique maximal unmarked chain it is part of. For tadpoles we set $c(e) = 2$. On ${MG}_{n,\geq 2}$ we define a map $$\Lambda: MG_{n,\geq 2} \to MG_{n,\geq 2}$$ that replaces maximal unmarked chains of length $k$ by unmarked chains of length $k+1$ (Figure \ref{fig:Lambda}). For graphs $(G,M,or(G))$ in $MG_{n\geq 2}$ with $G = (V,E)$ we define $$\Lambda(G,M,or(G)) = \sum_{e\in E-M} \frac{1}{c(e)}((V\cup \{v\},E-\{e\}\cup\{e_1,e_2\}),M,\Lambda^eor(G))$$ where $v$ is a new vertex, $e = \{v_1,v_2\},e_1 =\{v,v_1\},$ and $e_2 = \{v,v_2\}$. For $n=0$ the orientation remains unchanged $$\Lambda^eor(G) = or(G).$$ For $n=1$ is more complicated since unmarked edges are oriented we define $$\Lambda^eor(G) = (e_1,v_1)\wedge e_1\wedge (e_1,v)\wedge (e_2,v)\wedge e_2\wedge (e_2,v_2)\wedge d_{(e,v_1)}d_{e}d_{(e,v_2)} or(G).$$ Note that this expression is invariant of relabeling the subscript $1
$ and $2$. Naturally, the map $\Lambda$ induces a map $MG_{n\geq 2} \to MG_{n\geq 2} $. One can convince themselves that inserting vertices at different edges in a chain yields orientation-isomorphic graphs. Note that on $MG_{n\geq 2} $ the map $\exp(\Lambda)$ is defined. We can now formulate the recipe of constructing a differential $D$ of $MG_{n,\geq 2}$ with ingredient being a differential $\delta$ on $MG_{n\geq 2} $ that anticommutes with $\delta_{s,\geq 2}$.\\
\begin{figure}[h]
    \centering
    \includegraphics[scale = 0.6]{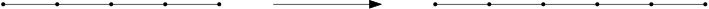}
    \caption{The action of $\Lambda$}
    \label{fig:Lambda}
\end{figure}
\begin{prop}\label{Prop Diff}
    Suppose that $\delta$ is a differential on $MG_{n,\geq 2}$ such that $\delta_{s\geq 2}\delta = -\delta\delta_{s,\geq 2}$. Then for the differential $D = \exp(\Lambda)\delta\exp(-\Lambda)$ we have $$(\delta_s+\delta_m+D)^2 =0.$$
\end{prop}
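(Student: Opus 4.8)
The plan is to expand $(\delta_s + \delta_m + D)^2$ and show each of the resulting pieces vanishes, leveraging the key algebraic identity already exploited in Proposition \ref{Thm MG}, namely $[\delta_s, \Delta] = 0$ (equivalently $[\delta_{s,\geq 2}, \Delta_{\geq 2}] = 0$), which gives the conjugation formula $\exp(-\Delta_{\geq 2})\,\delta_m\,\exp(\Delta_{\geq 2}) = \delta_m + \delta_{s,\geq 2}$. Expanding the square, we get
\begin{equation*}
(\delta_s+\delta_m+D)^2 = (\delta_s+\delta_m)^2 + D^2 + (\delta_s+\delta_m)D + D(\delta_s+\delta_m).
\end{equation*}
The first term is $0$ since $\delta_s+\delta_m$ is a differential on $MG_n$. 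The second term is $0$ because $D = \exp(\Lambda)\,\delta\,\exp(-\Lambda)$ is conjugate to $\delta$, and $\delta^2 = 0$ by hypothesis, so $D^2 = \exp(\Lambda)\,\delta^2\,\exp(-\Lambda) = 0$. So everything reduces to showing the anticommutator $(\delta_s+\delta_m)D + D(\delta_s+\delta_m) = 0$, i.e. that $D$ anticommutes with $\delta_s + \delta_m$ on $MG_{n,\geq 2}$ (and then one checks this descends to $MG_n$).

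The central step is to understand how $\Lambda$ interacts with $\delta_m$ and $\Delta_{\geq 2}$. First I would establish that $\Lambda$ commutes with $\delta_m$: both operations are "local" and disjoint — $\delta_m$ marks an unmarked edge, while $\Lambda$ inserts a bivalent vertex into an unmarked chain — and the combinatorial factors $1/c(e)$ are designed precisely so that the two orders of applying these maps agree (the chain-length bookkeeping is what makes $\Lambda$ well-defined in the first place, and the same bookkeeping makes it commute with $\delta_m$). Next, and this is where the real content lies, I would show that $\Lambda$ interacts with $\Delta_{\geq 2}$ — or better, with $\delta_{s,\geq 2} = [\delta_m, \Delta_{\geq 2}]$ — in a controlled way. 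The natural guess, in the spirit of the $\exp(\Delta)$ trick, is that $\Lambda$ commutes with $\delta_{s,\geq 2}$ as well: inserting a bivalent vertex into a chain and splitting a vertex are again essentially disjoint local moves once one accounts for the subtlety that splitting a vertex adjacent to a chain can lengthen that chain. If $[\Lambda, \delta_m] = 0$ and $[\Lambda, \delta_{s,\geq 2}] = 0$, then $[\Lambda, \delta_s + \delta_m] = 0$ on $MG_{n,\geq 2}$ (using $\delta_s = \delta_{s,\geq 2}$ restricted appropriately), hence $\exp(\pm\Lambda)$ commutes with $\delta_s + \delta_m$, giving
\begin{equation*}
(\delta_s+\delta_m)D + D(\delta_s+\delta_m) = \exp(\Lambda)\bigl((\delta_s+\delta_m)\delta + \delta(\delta_s+\delta_m)\bigr)\exp(-\Lambda),
\end{equation*}
and the inner anticommutator vanishes by the hypothesis $\delta_{s,\geq 2}\delta = -\delta\delta_{s,\geq 2}$ together with $[\delta_m,\delta]$-type compatibility (one needs $\delta$ to anticommute with $\delta_m$ too, or the statement should be read with $\delta$ anticommuting with all of $\delta_s+\delta_m$; I would check the precise hypothesis needed and state it).

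Finally I would address descent: $D = \delta_h + \delta_{h,e}$ is built from $\delta$ (which attaches an unordered hair) conjugated by $\Lambda$, and one must verify that $\delta_s + \delta_m + D$ preserves the subspaces cutting out $MG_n$ from $MG_{n,\geq 2}$ — concretely, that the bivalent vertices introduced by $\exp(-\Lambda)$ are all cancelled again by $\exp(\Lambda)$ so that $D$ maps $MG_n$ into $MG_n$ — and that it further descends to $FG_n = MG_n/CMG_n$, which follows since none of $\delta_s$, $\delta_m$, $\delta_h$, $\delta_{h,e}$ can decrease $\dim H_1(M;\mathbb{Q})$. The main obstacle I anticipate is the $n=1$ orientation bookkeeping in the claim $[\Lambda, \delta_{s,\geq 2}] = 0$: verifying that the wedge of half-edges, edges, and vertices introduced by $\Lambda^e$ and by $\delta_{s,\geq 2}^v$ commute past each other up to the correct sign, especially in the case where the split vertex is an endpoint of the chain being lengthened, will require careful sign-chasing. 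Everything else is either formal (the conjugation/exponential manipulations) or a routine locality-of-disjoint-moves argument.
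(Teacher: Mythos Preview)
Your proposal has a genuine gap at its central step: the claim that $[\Lambda,\delta_m]=0$ is false. The two operations are not disjoint---both act on unmarked edges. Applying $\Lambda$ first creates an extra unmarked edge in the chain, which $\delta_m$ can then mark; that term has no counterpart in $\Lambda\delta_m$, and the $1/c(e)$ weights do not rescue you (marking an edge inside a chain cuts the chain and changes $c$ for the surviving pieces). In fact the paper records exactly this commutator as
\[
[\delta_m,\Lambda]=\delta_{s,\geq 2}-\delta_s,
\]
the part of vertex-splitting that produces a bivalent vertex. Your aside ``using $\delta_s=\delta_{s,\geq 2}$ restricted appropriately'' reflects the same confusion: on $MG_{n,\geq 2}$ these are genuinely different operators, and their difference is the whole point of the construction.

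The paper's argument does not try to make $\exp(\Lambda)$ commute with $\delta_s+\delta_m$; instead it uses $\exp(\Lambda)$ as a change of gauge that \emph{conjugates} $\delta_s+\delta_m$ into $\delta_m+\delta_{s,\geq 2}$. From the identity above together with $[\Lambda,[\delta_m,\Lambda]]=0$ the BCH expansion terminates after one step, giving
\[
\exp(-\Lambda)(\delta_s+\delta_m)\exp(\Lambda)=\delta_m+\delta_{s,\geq 2}.
\]
Then one sandwiches: $(\delta_s+\delta_m)D=\exp(\Lambda)(\delta_m+\delta_{s,\geq 2})\,\delta\,\exp(-\Lambda)$, invokes the anticommutation hypothesis on $\delta_{s,\geq 2}$ (together with anticommutation of $\delta$ with $\delta_m$, which holds in all the examples and is implicitly used), and unwinds to $-D(\delta_s+\delta_m)$. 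So $\exp(\Lambda)$ is not a symmetry commuting past the standard differential; it is the device that trades $\delta_s$ for $\delta_{s,\geq 2}$ so that the hypothesis can be applied at all.
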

\begin{proof}
The proof essentially consists of noticing that $$\delta_{s,\geq 2} = \delta_s+ [\delta_m,\Lambda]$$
and that $$[\Lambda,[\delta_m,\Lambda]] = 0$$
so that $$\exp(-\Lambda)(\delta_m+\delta_s)\exp(\Lambda) = \delta_{s,\geq 2}.$$
We have that \begin{dmath*}
    (\delta_s+\delta_m)D = (\delta_s+\delta_m)(\exp(\Lambda)\delta\exp(-\Lambda)) = 
    \exp(\Lambda)\exp(-\Lambda) (\delta_s+\delta_m)\exp(\Lambda)\delta\exp(-\Lambda) =
    \exp(\Lambda)\delta_{s,\geq 2}\delta\exp(-\Lambda) = 
    - \exp(\Lambda)\delta\delta_{s,\geq 2}\exp(-\Lambda) = 
     - \exp(\Lambda)\delta\exp(-\Lambda)\exp(\Lambda)\delta_{s,\geq 2}\exp(-\Lambda) =  -D(\delta_s+\delta_m).
\end{dmath*}
Since $D^2 = (\delta_s+\delta_m)^2 = 0$ we have that $(\delta_s+\delta_m+D)^2 = 0$.
\end{proof}
Let $\pi:MG_{n\geq 2} \to MG_n$ be the natural projection. To find a differential on $MG_n$ we still need to check that $(\pi\circ D)^2 = 0$. In this case, we have that $$\delta_s+\delta_m+\pi\circ D$$
is a differential on $MG_n$ since $\pi$ and $\delta_s+\delta_m$ commute and thus\begin{dmath*}
    (\delta_s+\delta_m+\pi\circ D)^2 = (\delta_s+\delta_m)\circ \pi \circ D + \pi\circ D \circ (\delta_s+\delta_m) = 
    \pi\circ[(\delta_s+\delta_m)\circ D+D\circ (\delta_s+\delta_m)] = 0.
\end{dmath*}
In practice, it is far easier to check that a differential $\delta$ on $MG_{n\geq 2} $ anticommutes with $\delta_{s,\geq 2}$ than it is to check if it anticommutes with  $\delta_s+\delta_m$. In the rest of this section, we explore some examples and applications of Proposition \ref{Prop Diff}. Note that  $$H(MG_{n,\geq 2},\delta_s+\delta_m+D) = H(MG_{n,\geq 2} ,\delta_{s,\geq 2} + \delta)$$ since $\exp(\Lambda)$ establishes an isomorphism. On $HG_{n,\geq 2}$ we have $\delta_m = 0,D = \delta,\delta_s=\delta_{s,\geq 2},$ and $$H(HG_{n,\geq 2},\delta_s+\delta_m+D) = H(HG_{n,\geq 2} ,\delta_s + \delta).$$ In particular must $(\delta_s+\delta)^2 = 0$. If $D$ induces a differential on $FG_{n,\geq 2}$ then we also have $$H(FG_{n,\geq 2},\delta_s+\delta_m+D) = H(FG_{n,\geq 2} ,\delta_{s,\geq 2} + \delta).$$
In subsections \ref{Section Dishonest Hair},\ref{Section Connecting}, and \ref{Section Quasihair} we give examples of how to apply Proposition \ref{Prop Diff}. We then concentrate on the differential from section \ref{Section Dishonest Hair} for the rest of the paper.

\subsection{Attaching Dishonest Hairs}\label{Section Dishonest Hair}
The first differential we define is given by attaching dishonest hairs vertices and unmarked edges. We will apply Proposition \ref{Prop Diff} to a differential $\delta_h$ on $MG_{n,\geq 2}$ that attaches a dishonest hair to an internal vertex. For graphs $(G,M,or(G))$ in $MG_{n,\geq 2}$ with $G = (V,E)$ we define $$\delta_{h}(G,M,or(G)) = \sum_{v \in V^{int}}(V\cup\{v_0\},E\cup\{e_0\}),M\cup\{e_0\},\delta_h^vor(G))$$
where $e_0 = \{v,v_0\}$ and the orientation is given by $$\delta_h^vor(G)) = e_0\wedge or(G)$$
for $n=0$ and by $$\delta_h^vor(G)) = (v,e_0)\wedge (v_0,e_0)\wedge v_0\wedge or(G).$$
This induces a differential $\delta_h$ on $MG_{n,\geq 2}$ and it is easily checked that $\delta_h\delta_{s,\geq 2} = -\delta_{s,\geq 2}\delta_h$. Hence, after Proposition \ref{Prop Diff} we have that the map $D = \exp(\Lambda)\delta_h\exp(-\Lambda)$ is a differential. One can see that $$\delta_{h,e} = -[\delta_h,\Lambda]$$ is a differential that attaches a dishonest hair to an unmarked edge (Fig. \ref{fig: diff h,e}) and that $[\Lambda,\delta_{h,e}] = 0$. That is $$D = \delta_h+\delta_{h,e}.$$
Note that $D(MG_n) \subset MG_n$, thus $\delta_h+\delta_{h,e}$ defines a differential on $MG_n$ and in particular on $HG_n$. Clearly, $\delta_h+\delta_{h,e}$ leave $C_1MG_n$ invariant, therefore the differential induces a differential on $FG_n$. Hence, we found a differential $\delta_s+\delta_m+\delta_h+\delta_{h,e}$ on $FG_n$ and $HG_n$.
\subsection{Adding Interior Edges}\label{Section Connecting}
The following generalizes differentials on $HG_n(0,0)$ introduced in \cite{DGC} by Willwacher, Khoroshkin, and Živković.
For $n=0$ we define a differential $\delta_{v,v}$ that connects two vertices by a marked edge. Namely, for graphs $(G,M,or(G))$ with $G = (V,E)$ define $$\delta_{v,v}(G,M,or(G)) = \sum_{u,w\in V^{int}}((V,E\cup\{e_{u,w}\}),M\cup\{e_{u,w}\},e_{u,w}\wedge or(G)$$
where $e_{u,w} = \{u,w\}$. In $MG_{0,\geq 2}$ it is easily checked that $\delta_{s,\geq 2}\delta_c = -\delta_c\delta_{s,\geq 2}$. Thus Proposition \ref{Prop Diff} applies and $D= \exp(\Lambda)\delta_c\exp(-\Lambda)$ is so that $\delta_s+\delta_m+D$ is a differential. Let $$\delta_{v,e} = -[\delta_{v,v},\Lambda]$$ and $$\delta_{e,e} = -\frac{1}{ 2}[\delta_{v,e},\Lambda].$$ One can check that $$D = \delta_{v,v} +\delta_{v,e} +\delta_{e,e}.$$ The maps $\delta_{v,e}$ and $\delta_{e,e}$ are differentials themselves where $\delta_{v,e}$ connects unmarked edges and internal vertices and $\delta_{e,e}$ connects two unmarked edges (not necessarily distinct) by a marked edge. The map $D$ leaves $MG_0$ invariant, therefore $\delta_s+\delta_m+D$ defines a differential on $MG_0$. Moreover, $D$ defines a differential on $FG_0$ that anticommutes with $\delta_s+\delta_m$.
\begin{remark}
    In \cite{DGC} a further differential is defined on $HG_1(0,0)$. Although the differential is more complicated it certainly can be generalized to a differential on $MG_1$ by similar arguments. 
\end{remark}

\subsection{Attaching Quasihairs}\label{Section Quasihair}
We define a differential on $MG_0$ that attaches graphs to vertices that behave similarly to dishonest hairs. Namely, consider the graph $S$ (Figure \ref{fig:quasihair}) that consists of a marked edge $e_S$ with a tadpole attached to one end. For a graph $G$ and vertex $v$ let $G\cup_vS$ denote the graph obtained by attaching $S$ at the unique univalent vertex to $v$. We say a vertex is a \emph{quasileaf} if it is of valence three and has a tadpole attached. The edge creating the tadpole is called a \emph{flower}. For graphs $(G,M,or(G))$ in $MG_0$ we define $$\delta_q(G,M,or(G)) = \sum_{\substack{v\in V^{int}\\\text{no quasileaf}}}(G\cup_vS,M\cup\{e_S\},e_S\wedge or(G)).$$
This behaves very much like $\delta_h$ in section \ref{Section Dishonest Hair}. Although Proposition \ref{Prop Diff} does not directly apply since $\delta_{s\geq 2}$ acts on $S$ and $\delta_q$ and $\delta_{s,\geq 2}$ do not anitcommute. However, we can slightly alter the setup and make a similar argument work.  We alter $\Delta_{\geq 2}$ to $\tilde \Delta_{\geq 2}$ by subtracting the graphs where a flower is destroyed. Similarly, we change $\Lambda$ to $\tilde \Lambda$ by subtracting the graphs where a flower is destroyed. We then have that $$\exp(-\tilde \Lambda)(\delta_m+\delta_s)\exp(\tilde \Lambda) = \tilde \delta_{s,\geq 2} = [\delta_m,\tilde \Delta_{\geq2}]$$ and $$
\delta_q\tilde\delta_{s,\geq 2} = -\tilde\delta_{s,\geq 2}\delta_q.$$ The same proof as for Proposition \ref{Prop Diff} shows that $D = \exp(\tilde\Lambda)\delta_q\exp(-\tilde\Lambda)$ is a differential that commutes with $\delta_s+\delta_m$. It is given by $$D = \delta_q - [\delta_q,\tilde\Lambda] =\delta_q + \delta_{q,e}$$
where $\delta_{q,e}$ attaches $S$ to an unmarked edge. The map $D$ on $MG_0$ descends to a differential on $FG_0$ and $HG_0$ that anticommute with $\delta_s+\delta_m$.
\begin{remark}
    It would be interesting if there is an analog quasihair in the case of $n=1$.
\end{remark}
\begin{figure}[h]
    \centering
    \includegraphics[scale = 0.25]{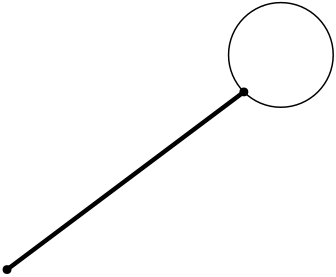}
    \caption{Graphs S: A quasihair.}
    \label{fig:quasihair}
\end{figure}

\section{Construction of the Spectral Sequences}\label{section spec seq}
We first construct the spectral sequences that we will use to prove Theorem \ref{section spec seq}.
\begin{prop}\label{prop spec seq}
    There is a spectral sequence with $$E_1 = H(FG_n,\delta_s+\delta_m)$$ and that converges to $H(FG_n,\delta_s+\delta_m+\delta_h+\delta_{h,e})$. Similarly, there is a spectral sequence that starts with $$E_1 = H(HG_n,\delta_s)$$ and that converges to $H(HG_n,\delta_s+\delta_h)$.
\end{prop}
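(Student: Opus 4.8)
The plan is to obtain both spectral sequences as the standard spectral sequences associated to a filtration by number of dishonest hairs, exploiting the fact that the extra differentials $\delta_h$ and $\delta_{h,e}$ each strictly raise the number of dishonest hairs by one. First I would introduce, for each $k \geq 0$, the subspace $F^k FG_n \subset FG_n$ spanned by series of graphs with at least $k$ dishonest hairs (and likewise $F^k HG_n$). Since attaching a dishonest hair via $\delta_h$ or $\delta_{h,e}$ produces a graph with one more dishonest hair, while $\delta_s$ and $\delta_m$ preserve the number of dishonest hairs, the full differential $\delta_s + \delta_m + \delta_h + \delta_{h,e}$ maps $F^k$ into $F^k$, so this is a genuine filtration by subcomplexes. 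It is bounded below ($F^0$ is everything) and exhaustive, and within each fixed subcomplex $C(r,k)$ (fixed honest and dishonest hair count) the filtration is finite, so convergence is not an issue on each such piece; one then takes the product over $r,k$ as in the discussion preceding the statement.

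Next I would identify the associated graded. On $\mathrm{gr}^k = F^k/F^{k+1}$ the induced differential is exactly the part of $\delta_s + \delta_m + \delta_h + \delta_{h,e}$ that preserves the dishonest hair count, namely $\delta_s + \delta_m$ (resp. $\delta_s$ on $HG_n$, where $\delta_m$ vanishes on honest-hair-only... more precisely $\delta_m=0$ since all non-honest-hair edges are already marked). Hence the $E_0$ page is $(FG_n, \delta_s + \delta_m)$ with the dishonest-hair grading, and $$E_1 = H(\mathrm{gr}\, FG_n, \delta_s + \delta_m) = H(FG_n, \delta_s + \delta_m),$$ the differential $d_1$ being induced by $\delta_h + \delta_{h,e}$. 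For $HG_n$ the same computation gives $E_1 = H(HG_n, \delta_s)$ with $d_1$ induced by $\delta_h$. Then I invoke the general convergence criterion (Proposition~\ref{prop convergence}, referenced in the text, which compares degree and the filtration index) to conclude that each spectral sequence converges to the homology of the total complex, i.e. to $H(FG_n, \delta_s+\delta_m+\delta_h+\delta_{h,e})$ and $H(HG_n, \delta_s+\delta_h)$ respectively. The key algebraic input—that these really are square-zero differentials on $FG_n$ and $HG_n$—was already established in Section~\ref{Section Dishonest Hair} via Proposition~\ref{Prop Diff}.

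The only genuinely delicate point, and the one I would spend the most care on, is convergence, because $FG_n$ and $HG_n$ are \emph{products} (completed direct sums) over all hair numbers $r$ and $k$, not finite complexes; a filtration that is merely exhaustive and bounded below on each summand need not yield a convergent spectral sequence on the product without a degreewise-finiteness argument. I would handle this exactly as in the proofs of Proposition~\ref{Thm MG} and Proposition~\ref{Thm CMG}: observe that on a graph with a fixed number of marked edges (the homological degree), the number of dishonest hairs is bounded, since dishonest hairs are marked; hence $\deg(G) \geq (\text{number of dishonest hairs of }G)$, so the filtration restricted to each fixed degree is finite and the hypotheses of Proposition~\ref{prop convergence} are met. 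This reduces the product issue to a degreewise statement and gives honest convergence. Everything else—checking the filtration is by subcomplexes, identifying $E_0$ and $E_1$—is routine bookkeeping with the definitions of $\delta_h$ and $\delta_{h,e}$.
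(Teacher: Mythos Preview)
Your proposal is correct and follows the same route as the paper: filter by the number of (dishonest) hairs, observe that on the associated graded only $\delta_s+\delta_m$ (resp.\ $\delta_s$) survives, and invoke Proposition~\ref{prop convergence} for convergence via the bound $\deg(G)\geq |DHR(G)|$ coming from the fact that dishonest hairs are marked. The paper's own argument is terser but identical in substance; your degree-versus-filtration-index discussion is precisely what the paper leaves implicit when it cites Proposition~\ref{prop convergence}.
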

\begin{proof}
Consider the subcomplexes $$(FG_n^{\geq k},\delta_s+\delta_m+\delta_h+\delta_{h,e}) \subset (FG_n,\delta_s+\delta_m+\delta_h+\delta_{h,e})$$
and 
$$(HG_n^{\geq k},\delta_s+\delta_h) \subset (HG_n,\delta_s+\delta_h)$$
that are spanned by series of graphs with at least $k$ hairs.
This defines descending filtrations $$FG_n = FG_n^{\geq 0} \supset FG_n^{\geq 1} \supset FG_n^{\geq 2} \supset ... $$
 and $$HG_n = HG_n ^{\geq 0} \supset HG_n^{\geq 1} \supset HG_n^{\geq 1} \supset ... \quad.$$
One can define spectral sequences associated with these filtrations that converge to the total homology (Proposition \ref{prop convergence}). The first pages are given by $H(FG_n,\delta_s+\delta_m)$ and $H(HG_n,\delta_s)$ respectively. 
\end{proof}
We now proceed to compute $H(FG_n,\delta_s+\delta_m+\delta_h+\delta_{h,e})$ and $H(HG_n,\delta_s+\delta_h)$. A class of graphs that play a major role in these homologies are graphs that we will call hedgehog graphs. We say a vertex of a graph $G$ in $MG_n$ which is not a leaf is \emph{critical} if it has valence three and a dishonest hair attached. Otherwise, a vertex which is not a leaf is called \emph{regular}. The set of \emph{hedgehog graphs} ins $MG_n$ is spanned by series of graphs having no regular vertices.  These are graphs with $k\geq 1$ interior edges forming a loop and an additional $k$ edges that connect the loop to $k$ hairs. The corresponding sets in $HG_n$ and $FG_n$ are denoted $HHG_n$ (hairy hedgehog graphs) and $FHG_n$ (forested hedgehog graphs) respectively (Figure \ref{fig: FGH}). \begin{figure}[h!]
    \centering
    \includegraphics[scale = 0.35]{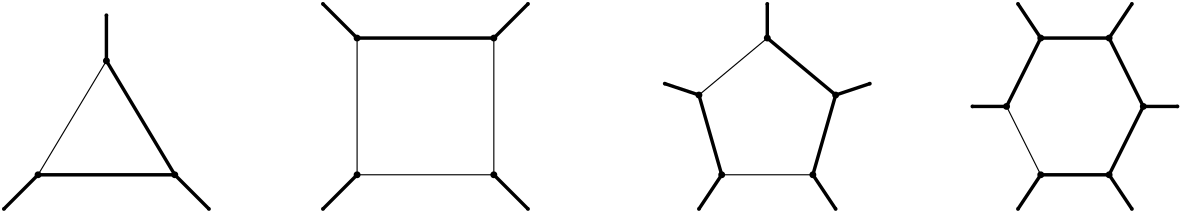}
    \caption{Forested hedgehog graphs}
    \label{fig: FGH}
\end{figure}

For convenience, we restate the definition of $H_n$. Namely, the graded vector space given by $$H_n = \prod_{\substack{ k\geq 1\\ k \equiv (-1)^n \text{ mod }4}}HH^k.$$
Let $RHG_n$ and $RFG_n$ denote the subcomplexes of $HG_n$ and $FG_n$ respectively spanned by series of graphs having at least one regular vertex. 
\begin{lemma}\label{lem regular}
    The homologies $$H(RHG_n,\delta_h) =0$$
    and $$H(RFG_n,\delta_h)=0$$ are trivial.
\end{lemma}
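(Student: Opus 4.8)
The plan is to produce an explicit contracting homotopy for $\delta_h$, in the spirit of the homotopy $h$ used in the proof of Proposition \ref{Thm MG}; one construction handles $RHG_n$ and $RFG_n$ at once, so I describe it for $RHG_n$ and record the cosmetic changes for $RFG_n$ at the end.

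First I would make the normalization observation that no internal vertex of a nonzero graph carries two distinct dishonest hairs: the automorphism swapping them is orientation reversing, since for $n=0$ it transposes two marked edges and for $n=1$ it transposes two leaf vertices and two pairs of incident half-edges, with total sign $-1$. Hence $\delta_h$ effectively attaches a dishonest hair only to internal vertices that are \emph{hair free}; such a vertex has valence $\ge 3$, so it is regular, and attaching a dishonest hair makes it a vertex of valence $\ge 4$ with one dishonest hair, which is again regular. It follows that $\delta_h$ preserves the number $\rho(G)$ of regular vertices of $G$, so the complex decomposes as a product of $\delta_h$-subcomplexes $RHG_n=\prod_{\rho\ge 1}RHG_n^{(\rho)}$, where $RHG_n^{(\rho)}$ is spanned by graphs with exactly $\rho$ regular vertices; note that $\rho\ge 1$ everywhere in $RHG_n$ by definition of the complex.

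I would then define a degree $-1$ map $h$ by summing, over every internal vertex $v$ of valence $\ge 4$ that carries a dishonest hair, the graph obtained by deleting that hair, with the orientation transformed exactly as in the homotopy of Proposition \ref{Thm MG} (apply $d_{e_0}$ to the hair edge $e_0$ for $n=0$; strip the new leaf and its two incident half-edges for $n=1$). The valence-$\ge 4$ restriction is forced, because deleting the hair of a \emph{critical} valence-three vertex would create an inadmissible valence-two vertex---this is exactly why hedgehog graphs fail to be contractible and survive to $E_\infty$. Deleting a hair as above leaves an admissible graph whose vertex $v$ is still regular, so $h$ preserves $\rho$ and restricts to each $RHG_n^{(\rho)}$. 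The heart of the proof is the homotopy identity $\delta_h h+h\delta_h=\rho\cdot\mathrm{id}$ on $RHG_n^{(\rho)}$, up to the sign conventions of Proposition \ref{Thm MG}: the composites that add and then delete, or delete and then add, the \emph{same} dishonest hair each return $G$---here one must pin down the orientation conventions so that $d_{e_0}(e_0\wedge-)$ and $e_0\wedge d_{e_0}(-)$ act as the identity, and likewise in the half-edge version---and there are precisely $\#\{\text{hair-free internal vertices}\}+\#\{\text{valence-}{\ge}4\text{ vertices carrying a hair}\}=\rho(G)$ of them; every remaining term adds a hair at some vertex $v$ and deletes a hair at a different vertex $w$, and each such term occurs once in $\delta_h h$ and once in $h\delta_h$ with opposite signs (the usual Koszul cancellation coming from commuting the wedging and stripping operations), so these cancel. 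Therefore $\tfrac{1}{\rho}\,h$ is a contracting homotopy on $RHG_n^{(\rho)}$, so $H(RHG_n^{(\rho)},\delta_h)=0$ for every $\rho\ge 1$, whence $H(RHG_n,\delta_h)=0$.

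For $RFG_n$ the argument is identical: a dishonest hair is a leaf edge, so neither attaching nor deleting it changes the first homology of the marked subgraph; hence $h$ preserves $CMG_n$, descends to $FG_n=MG_n/CMG_n$, restricts to $RFG_n$, and the identity $\delta_h h+h\delta_h=\rho\cdot\mathrm{id}$ continues to hold modulo $CMG_n$. The step I expect to be the main obstacle is the sign bookkeeping for $n=1$: choosing a fixed order in which the leaf vertex and the two half-edges of a hair enter and leave the orientation so that the terms recovering $G$ come with coefficient $+1$ and the cross terms genuinely cancel, together with the preliminary check that a repeated dishonest hair forces a graph to vanish.
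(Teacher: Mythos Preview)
Your proof is correct and follows essentially the same approach as the paper: build a contracting homotopy by deleting a dishonest hair, using that on nonzero graphs $\delta_h$ preserves the number $\rho$ of regular vertices so that the diagonal terms of $\delta_h h+h\delta_h$ sum to $\rho\cdot\mathrm{id}$ while the off-diagonal terms cancel. The only cosmetic difference is that the paper absorbs the factor $\tfrac{1}{\rho}$ into the definition of $h$ rather than decomposing by $\rho$, and it leaves implicit both the ``two dishonest hairs at one vertex force zero'' observation and the admissibility constraint that prevents deleting the hair of a critical vertex---points you spell out explicitly.
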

\begin{proof}
    For a graph $G$ let $\eta(G)$ denote the regular vertices.  Consider the following map $h:HG_n\to HG_n[-1]$. If $(G,M,or(G)) = ((V,E),M,or(G))$ is a graph with $\eta(G) > 0$ then let $$h(G,M,or(G)) = \frac{1}{ \eta(G)}\sum_{v\in HR}((V- \{v\},E- \{e\}),M-\{e\},h^vor(G))$$
where $e$ is the unique edge attached to $v$. For $n= 0$ the orientation is given by $$h^vor(G) = d_e or(G)$$ and for $n=1$ and $e=\{v,w\}$ where $v$ is the leaf, we define it as $$h^vor(G) = d_vd_{(v,e)}d_{(w,e)}or(G).$$
On $RHG_n$ we find $$h\delta_h+\delta_hh = id.$$
Therefore, $H(RHG_n,\delta_h) = 0$. An analogous map $\tilde h: RFG_n\to RFG_n[-1]$ can be defined to show that $H(FGH_n,\delta_h) = 0$.
\end{proof}
We proceed to calculate some more homologies.
\begin{prop}\label{prop first hom}
    We have $$H(HG_n,\delta_h) = H(HHG_n,0) = H_n[-1]$$ and 
    $$H(FG_n,\delta_m+\delta_h) = H(FHG_n,\delta_m) = H_n.$$
\end{prop}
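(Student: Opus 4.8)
The plan is to split both $HG_n$ and $FG_n$ along the subcomplex of graphs carrying a regular vertex, reducing the statement to Lemma \ref{lem regular} together with an explicit computation on hedgehog graphs. The first point to check is that $RHG_n$ is a $\delta_h$-subcomplex of $HG_n$ and $RFG_n$ a $(\delta_m+\delta_h)$-subcomplex of $FG_n$: marking an edge leaves the vertex set untouched, and attaching a dishonest hair only raises valences, so it may turn a critical vertex into a regular one but can never destroy a regular vertex. Hence there are short exact sequences of complexes
$$0\longrightarrow (RHG_n,\delta_h)\longrightarrow (HG_n,\delta_h)\longrightarrow (HHG_n,\bar\delta_h)\longrightarrow 0$$
and
$$0\longrightarrow (RFG_n,\delta_m+\delta_h)\longrightarrow (FG_n,\delta_m+\delta_h)\longrightarrow (FHG_n,\overline{\delta_m+\delta_h})\longrightarrow 0.$$
Applying $\delta_h$ to a hedgehog graph attaches a hair to one of its critical loop vertices and so produces a graph with a regular vertex; therefore $\bar\delta_h=0$ on $HHG_n$ and $\overline{\delta_m+\delta_h}=\delta_m$ on $FHG_n$.

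Next I would annihilate the regular parts. For $HG_n$ this is immediate: $H(RHG_n,\delta_h)=0$ by Lemma \ref{lem regular}, so the long exact sequence gives $H(HG_n,\delta_h)\cong HHG_n=H(HHG_n,0)$. For $FG_n$ an extra step is needed because Lemma \ref{lem regular} concerns only $\delta_h$. I would filter $(RFG_n,\delta_m+\delta_h)$ by the number of marked interior edges: $\delta_m$ strictly increases this number, while $\delta_h$ preserves it (it adds a marked \emph{hair}), so the associated graded differential is exactly $\delta_h$ and $E_1=H(RFG_n,\delta_h)=0$ by Lemma \ref{lem regular}. In each fixed degree the number of marked interior edges is bounded by the total number of marked edges, so the filtration is degreewise finite; the spectral sequence then converges by \ref{prop convergence}, giving $H(RFG_n,\delta_m+\delta_h)=0$ and hence $H(FG_n,\delta_m+\delta_h)\cong H(FHG_n,\delta_m)$ from the long exact sequence.

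It remains to compute the hedgehog homologies. Both $HHG_n$ and $(FHG_n,\delta_m)$ decompose as products over the loop length $m\ge 1$, with finite-dimensional summands. For fixed $m$, $HHG_n|_m$ is spanned by the single graph $G_{[m]}$ with all $2m$ edges marked, so $H(HHG_n|_m,0)$ is one-dimensional or zero according to whether the dihedral automorphism group acts trivially on its orientation line; the sign count --- different for $n=0$ and $n=1$ through the respective roles of edges versus vertices and half edges --- shows nonvanishing exactly when the index $k$ of the corresponding $HH^k$ satisfies $k\equiv(-1)^n \bmod 4$, so $H(HHG_n,0)=H_n[-1]$. For $FHG_n|_m$ I would introduce the complex $\mathcal{K}_m$ with basis the subsets $S\subseteq\{1,\dots,m\}$ of marked loop edges and $\delta_m$ adjoining one edge; it is a tensor product of $m$ two-term acyclic complexes, hence acyclic, the forest condition removes the one-dimensional ($D_m$-stable) subcomplex $\mathbb{Q}G_{[m]}$, and the long exact sequence identifies $H(\mathcal{K}_m/\mathbb{Q}G_{[m]})$, $D_m$-equivariantly, with $\mathbb{Q}G_{[m]}$ shifted down one degree. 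Since passage to isomorphism classes with orientation signs is exact over $\mathbb{Q}$ (it is coinvariants for a finite group), $H(FHG_n|_m,\delta_m)$ is nonzero exactly when $HHG_n|_m$ is, only one degree lower; assembling the summands yields $H(FHG_n,\delta_m)=H_n$.

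The main obstacle will be this last bookkeeping: performing the orientation sign count uniformly in $n$ and $m$ --- in particular in the degenerate cases $m=1,2$ with tadpoles and double edges --- and verifying that the surviving degrees match the indexing conventions behind $HH^k$, $H_n$, and $H_n[-1]$.
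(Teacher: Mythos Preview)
Your proposal is correct and follows the same overall architecture as the paper: split off the subcomplex $R$ of graphs with a regular vertex, kill it via Lemma~\ref{lem regular} (directly for $HG_n$, and via the spectral sequence for the filtration by marked interior edges for $FG_n$), then compute the hedgehog piece by a dihedral symmetry count. Your use of short exact sequences is actually a bit more careful than the paper's phrasing ``$HG_n$ splits as $HHG_n\oplus RHG_n$'', since, as you observe, $\delta_h$ sends hedgehogs into $RHG_n$ and the splitting is only on the level of vector spaces.

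The one genuine difference is in the computation of $H(FHG_n,\delta_m)$. The paper writes down an explicit homotopy $q$ unmarking interior edges to show that classes are represented by graphs with a single unmarked loop edge, and then does the symmetry analysis directly on those. You instead recognise the auxiliary complex $\mathcal{K}_m$ of all markings of the $m$-gon as a tensor product of two-term acyclic complexes, remove the top cell $\mathbb{Q}G_{[m]}$ imposed by the forest condition, and read off the answer from the long exact sequence, then pass to $D_m$-coinvariants. Both arguments are short and correct; yours has the advantage of making the degree shift and the $D_m$-equivariance transparent (so the match with $HHG_n$ one degree lower is automatic), while the paper's homotopy is more hands-on and avoids introducing the labelled complex $\mathcal{K}_m$. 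Either way the residual ``bookkeeping'' you flag --- the small cases $m=1,2$ and the parity check in $n$ --- is exactly what the paper also has to do at the end.
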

\begin{proof}
We first compute $H(HG_n,\delta_h)$. For a graph $G$ let $\eta(G)$ denote the number of regular vertices. $HG_n$ splits as, $HHG_n\oplus RHG_n$. After Lemma \ref{lem regular} we have that $H(RHG_n,\delta_h) = 0$ and thus $H(HG_n,\delta_h) = H(HHG_n,\delta_h) = H(HHG_n,0) =HHG_n$.\\ It is only left to check that $HHG_n = H_n$. To see this, note that a graph $G$ with $\eta(G) = 0$ is trivial in $HHG_n$ if and only if there is a self-isomorphism $G\to G$ that inverts the orientation. The self-isomorphisms of such a graph with $k$ dishonest hairs are given by the symmetry group of a regular $k-$gon. One can check if $n=0$ orientation inverting symmetries exist if and only if $k$ is even. And if $n=1$ such symmetries exist if and only if $k$ is odd. Thus, $HHG_n$ is only non-trivial in degree $2k\geq 2$ for $k \equiv n+1$ mod $2$ (or equivalently in degree $k$ if $k-1 \equiv  (-1)^n$ mod $4$), where it is one-dimensional.\\
Now let us compute $H(FG_n,\delta_m+\delta_h)$. The forested graph splits into $FHG_n\oplus RFG_n$. Analogously to the case of $HG_n$ we find that $H(RHG_n,\delta_h) = 0$. Consider the filtration of $RHG_n$ that is given by the subcomplexes $RHG_n^{\geq k}$ spanned by series of graphs with at least $k$ marked internal edges. We can construct a spectral sequence associated to this filtration with $E_1 = H(RFG_n,\delta_h) = 0$  and that converges to $H(RFG_n,\delta_m+\delta_h)$. Thus, $H(RFG_n,\delta_m+\delta_h) = 0$ and $$H(FG_n,\delta_m+\delta_h) = H(FHG_n,\delta_m+\delta_h) = H(FHG_n,\delta_m).$$ Now consider $q:FHG_n\to FHG_n[-1]$ given by $$q((G,M,or(G)) = \frac{1}{ |E^{int}|}\sum_{e \in M -E^{int}}(G,M-\{e\},q^eor(G))$$
where for $n=0$ we define $$q^eor(G) = d_eor(G)$$ and for $n=1$ we let $$q^eor(G) = e\wedge or(G).$$
If $G$ is a forested hedgehog graph with $|E|-|M| > 1$ then $$(q\delta_m +\delta_mq)G = G.$$
We can conclude that any homology class is represented by graphs with $|E|-|M| = 1$. For $n=0$ note that graphs with $|HR|$ odd and $|E|-|M| = 2$ are trivial since there is an orientation inverting reflection. Moreover, graphs with $|E|-|M| = 1$ are trivial if and only if $|HR|$ is even. Conclude that the homology classes represented by graphs with $|E|-|M| = 1$ are nontrivial if and only if $|HR|$ is odd.\\
An analogous discussion can be done in the case $n=1$ to conclude that the nontrivial homology classes are given by graphs with $|HR|$ even and $|E|-|M| = 1$.\\
Forested hedgehog graphs with $|HR|= k$ and $|E|-|M| = 1$ have degree $2k-1$. Hence, $H(FHG_n,\delta_m)$ is one-dimensional in degree $2k-1\geq 1$ if $k\equiv n+1$ mod $2$ (or equivalently in degree $k\geq 1$ if $k \equiv (-1)^n$ mod 4). Conclude that $H(FG_n,\delta_m+\delta_h) = H(FHG_n,\delta_m) = H_n$.

\end{proof}
\begin{figure}
    \centering
    \includegraphics[scale = 0.3]{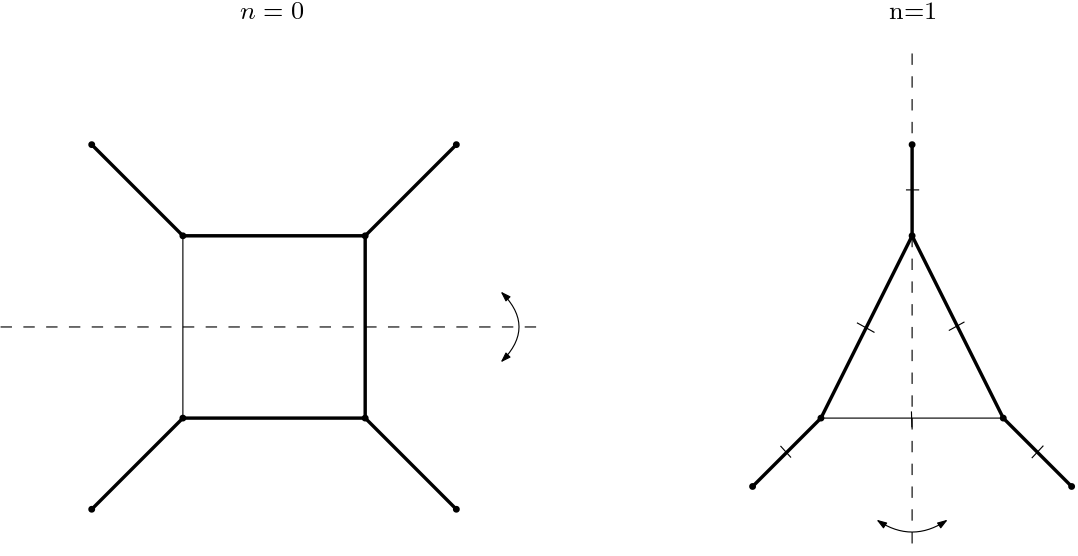}
    \caption{Symmetries of maximally marked forested hedgehog graphs.}
    \label{fig:sym FGH}
\end{figure}
These computations help us to compute the following.
\begin{prop}\label{prop second hom}
    We have $$H(HG_n,\delta_s+\delta_h) = H(HHG_n,0) = H_n[-1]$$ and $$H(FG_n,\delta_s+\delta_m+\delta_h+\delta_{h,e}) = H(FHG_n,\delta_m+\delta_{h,e}) = H_n.$$
\end{prop}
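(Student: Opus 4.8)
The plan is to derive all three equalities from Proposition~\ref{prop first hom} by means of the spectral sequence associated with the filtration by the number of internal vertices, run in parallel on the three complexes $(HG_n,\delta_s+\delta_h)$, $(FHG_n,\delta_m+\delta_{h,e})$ and $(FG_n,\delta_s+\delta_m+\delta_h+\delta_{h,e})$.

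First I would record the effect of each piece of the differential on the number of internal vertices. The map $\delta_m$ only re-labels an already present edge and $\delta_h$ attaches a new leaf, so both preserve the number of internal vertices; by contrast $\delta_s$ splits a vertex into two, and $\delta_{h,e}$ subdivides an unmarked edge by a trivalent vertex carrying a new dishonest hair, so each of these raises the number of internal vertices by exactly one. Hence, filtering each of the three complexes by the number of internal vertices — a filtration which on every block of fixed loop order and fixed numbers of honest and dishonest hairs is finite, since there the number of internal vertices is bounded — the induced differential on the associated graded is the sum of the filtration-preserving pieces, namely $\delta_h$ on $HG_n$, $\delta_m$ on $FHG_n$, and $\delta_m+\delta_h$ on $FG_n$. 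By Proposition~\ref{prop convergence} the three resulting spectral sequences converge to $H(HG_n,\delta_s+\delta_h)$, $H(FHG_n,\delta_m+\delta_{h,e})$ and $H(FG_n,\delta_s+\delta_m+\delta_h+\delta_{h,e})$ respectively, and by Proposition~\ref{prop first hom} their $E_1$-pages are
\[
H(HG_n,\delta_h)=H_n[-1],\qquad H(FHG_n,\delta_m)=H_n,\qquad H(FG_n,\delta_m+\delta_h)=H_n.
\]

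Next I would observe that all three spectral sequences degenerate at $E_1$ for degree reasons. The computations in the proof of Proposition~\ref{prop first hom} show that $H_n$ is supported in degrees $2k-1$ with $k\equiv n+1\pmod 2$, hence in degrees lying in a single residue class modulo $4$, and likewise $H_n[-1]=HHG_n$ is supported in a single residue class modulo $4$. Since the differentials $d_r$ of the spectral sequence raise the total degree by one, any $d_r$ with $r\geq 1$ would send a class of degree $\equiv c$ to one of degree $\equiv c+1$ modulo $4$; but in each of the three cases $E_1$, and therefore every $E_r$, vanishes outside a single residue class modulo $4$. Hence $d_r=0$ for all $r\geq 1$, so $E_\infty=E_1$, which gives
\[
H(HG_n,\delta_s+\delta_h)=H_n[-1],\qquad H(FHG_n,\delta_m+\delta_{h,e})=H_n,\qquad H(FG_n,\delta_s+\delta_m+\delta_h+\delta_{h,e})=H_n.
\]
The stated chains of equalities then follow, since $H(HHG_n,0)=HHG_n=H_n[-1]$.

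I expect the main difficulty to lie in the first step rather than in the degeneration. One must check carefully that $\delta_{h,e}$ raises the number of internal vertices by exactly one (so that the associated graded differential is $\delta_m+\delta_h$ and nothing larger), bound the number of internal vertices on each block so that the filtration is bounded and Proposition~\ref{prop convergence} genuinely applies to these (infinite) products of complexes, and keep in mind that, unlike for $\delta_m+\delta_h$, the subspace $RFG_n$ is \emph{not} a subcomplex for the full differential $\delta_s+\delta_m+\delta_h+\delta_{h,e}$ — splitting a regular vertex can produce an all-critical graph — which is precisely why the reduction to hedgehog graphs is available only after passing to the associated graded. Once these points are settled, the degeneration is immediate from the mod-$4$ concentration already established in Proposition~\ref{prop first hom}.
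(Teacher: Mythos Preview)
Your argument is correct and follows a genuinely different route from the paper's. The paper treats the two cases separately: for $HG_n$ it filters by the number of interior edges and compares the resulting spectral sequence with the analogous one on $HHG_n$; for $FG_n$ it first splits $FG_n=FHG_n\oplus RFG_n$ as complexes (on $FHG_n$ the maps $\delta_s,\delta_h,\delta_{h,e}$ all vanish, reducing that summand to $H(FHG_n,\delta_m)$), and then kills $RFG_n$ via a bespoke filtration by $\omega(G)=|E^{int}\cap M|+\zeta(G)$ whose associated graded retains only $\delta_h$, so that Lemma~\ref{lem regular} applies. Your single filtration by $|V^{int}|$ handles all three complexes uniformly, and the mod-$4$ concentration of $H_n$ (already recorded in Proposition~\ref{prop first hom}) forces degeneration at $E_1$ without any further comparison argument. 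Both proofs ultimately rest on Proposition~\ref{prop first hom}; yours is cleaner and more conceptual, while the paper's does not rely on the specific arithmetic of the degrees in which $H_n$ lives.

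Two small points to tighten. First, the blocks you propose (fixing loop order and \emph{both} hair counts) are not subcomplexes for the full differential, since $\delta_h$ and $\delta_{h,e}$ raise the number of dishonest hairs; decompose instead only over $(g,r)$. On each factor $C^{g\text{-loop}}(r,\bullet)$ the bound $|V^{int}|\le 2g-2+r+k$ combined with $k\le |M|=d$ shows that the filtration by $|V^{int}|$ is bounded in every cohomological degree, so Proposition~\ref{prop convergence} applies factorwise and the result follows by taking the product. Second, your closing caveat that $RFG_n$ fails to be a subcomplex for the full differential is mistaken: in a nonzero graph no internal vertex carries two dishonest hairs (the swap is an orientation-reversing automorphism), so splitting a regular vertex always leaves at least one of the two offspring without a dishonest hair, hence regular. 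The paper's proof in fact exploits precisely this direct-sum decomposition.
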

\begin{proof}
Let us first calculate $H(HG_n,\delta_s+\delta_h)$. This case is somewhat simpler than the case of forested hedgehog graphs. We define the subcomplexes $HG_n^{\geq k}$ spanned by series of graphs with at least $k$ interior edges. This induces a filtration $$HG_n = HG_n^{\geq 0} \supset HG_n^{\geq 1} \supset HG_n^{\geq 2} \supset ... \quad.$$ Since any edge has degree 1 we have a fixed degree $l$ and for large enough $k$ the degree $l$ part of $HG^\geq k$ is trivial. Thus, the associated spectral sequence converges to the total homology (Proposition \ref{prop convergence}). The first page of this spectral sequence is given by $$E_1 = H(HG_n,\delta_h) = H(HHG_n,0)$$
which coincides with the first page of the spectral sequence associated with the filtration of $(HHG_n,\delta_h+\delta_s)$ filtered by the number of interior edges. The latter spectral sequence converges to $H(HHG_n,\delta_s+\delta_h)$. Thus, we can conclude by standard spectral sequence result that the homologies coincide, i.e. $$H(HG_n,\delta_s+\delta_h) = H(HHG_n,\delta_s+\delta_h) = HHG_n = H_n.$$
With forested hedgehog graphs, one should be more careful, since filtering by the number of interior edges does not yield a bounded filtration. However, we have that $FG_n = FHG_n\oplus RFG_n$. In $FHG_n$ the differentials $\delta_s$, $\delta_h$, and $\delta_{h,e}$ are trivial and thus $$H(FHG_n,\delta_s+\delta_m+\delta_h+\delta_{h,e}) = H(FHG_n,\delta_m) = H_n.$$
Let $\zeta(G)$ denote the number of critical vertices of a graph and $\omega(G) = |E^{int}\cap M| +\zeta(G)$. We filter the complex $RFG_n$ by the value of $\omega$. That is, we define $RFG_n^{\geq k}$ be the subcomplex spanned by series of graphs with $\omega(G) \geq k$ and consider the filtration $$RFG_n = RFG_n^{\geq 0}\supset RFG_n^{\geq 1}\supset RFG_n^{\geq 2} \supset .... \quad.$$ Note that deg$(G) \geq \omega(G)$ and therefore Proposition \ref{prop convergence} applies. The associated spectral sequence converges to the homology $H(RHG_n,\delta_s+\delta_m+\delta_h+\delta_{h,e})$ and has $E_1 = H(RHG_n,\delta_h) = 0$ as the first page. Thus, $H(RHG_n,\delta_s+\delta_m+\delta_h+\delta_{h,e}) = 0$.
\end{proof}
Corollaries of Proposition \ref{prop spec seq} and Proposition \ref{prop second hom} are Theorem \ref{Thm FG} and Theorem \ref{Thm HG}.

\section*{Appendix A} \label{Appendix A}
\addcontentsline{toc}{section}{\protect\numberline{}Appendix}
In literature, the complexes $HG_n$ and $FG_n$ are often defined without dishonest hairs. In our notation, these complexes are described by $HG_n(\bullet,0)$ and $FG_n(\bullet,0)$. The homologies of $(FG_n,\delta_s+\delta_m)$ are directly related to the homology of a family of groups $\Gamma_{g,r}$ (see \cite{HatcherVogtmann}) that generalize the outer automorphism group of free groups $\Gamma_{g,0} = Out(F_g)$ and the automorphism groups of free groups $\Gamma_{g,1} = Aut(F_g)$. \begin{prop}
    Let $g,r\in \mathbb Z_{\geq 0}$ with $2g+r\geq 3$. Then \begin{enumerate}
        \item $H^k(FG_0^{g \text{ loop}}(r,0)) = H^k(\Gamma_{g,r};\mathbb Q)$
        \item $H^k(FG_1^{g \text{ loop}}(r,0)) = H^k(\Gamma_{g,r};sgn)$
    \end{enumerate}
    where $\mathbb Q$ is the trivial representation of $\Gamma_{g,r}$ and $sgn$ is the sign representation.\qed
\end{prop}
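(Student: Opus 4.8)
The plan is to realise $\Gamma_{g,r}$ as a group acting on a contractible complex whose $\Gamma_{g,r}$-equivariant cellular cochain complex is, after forgetting markings, exactly $FG_n^{g\text{ loop}}(r,0)$; the Proposition then follows by reading off $H^*(\Gamma_{g,r};-)$ from the equivariant cohomology of a point.

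First I would recall the relevant Outer space $\mathcal{O}_{g,r}$: the space of marked metric connected admissible graphs of rank $g$ with $r$ ordered leaves. It is contractible, $\Gamma_{g,r}$ acts on it properly with finite stabilisers, and it deformation retracts onto a finite-dimensional contractible spine $K_{g,r}$ carrying a cocompact $\Gamma_{g,r}$-action (for $r=0$ this is Culler--Vogtmann Outer space for $\mathrm{Out}(F_g)$; for general $r$ see \cite{HatcherVogtmann,ContantKassabovVogtmann,ContantVogtmann,BrunWillwacher}). The hypothesis $2g+r\geq 3$ is exactly the condition that a trivalent graph of rank $g$ with $r$ leaves exists, hence that $\mathcal{O}_{g,r}\neq\emptyset$ and $\Gamma_{g,r}$ together with this action are defined. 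Next I would spell out the combinatorics: the cubes of $K_{g,r}$ are indexed by pairs $(G,\Phi)$ with $G$ a marked connected admissible graph of rank $g$ with $r$ ordered leaves and $\Phi\subseteq E^{int}(G)$ a forest, the cube $(G,\Phi)$ having dimension $|\Phi|$ and codimension-one faces $(G/e,\Phi\setminus\{e\})$ and $(G,\Phi\setminus\{e\})$ for $e\in\Phi$. Forgetting the marking, $\Gamma_{g,r}$-orbits of $k$-cubes are precisely the forested graphs of loop order $g$ with $r$ honest hairs, no dishonest hairs, and $k$ marked edges; dualising the face maps, the cellular coboundary of $(G,\Phi)$ is the signed sum of the cubes having it as a codimension-one face, namely $(G,\Phi\cup\{e'\})$ for $e'$ an unmarked interior edge of $G$ --- this is $\delta_m$ --- and $(G',\Phi\cup\{e'\})$ with $G'/e'=G$, i.e. $G'$ obtained from $G$ by splitting an internal vertex into a new marked edge $e'$ --- this is $\delta_s$. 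So, up to this relabelling, the $\Gamma_{g,r}$-equivariant cellular cochain complex of $K_{g,r}$ is $(FG_n^{g\text{ loop}}(r,0),\delta_s+\delta_m)$, provided the orientation local systems agree.

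I would then invoke the standard isotropy argument: since $K_{g,r}$ is contractible with finite stabilisers, $H^*(\Gamma_{g,r};M)\cong H^*_{\Gamma_{g,r}}(K_{g,r};M)$ for every $\mathbb{Q}[\Gamma_{g,r}]$-module $M$, and since higher cohomology of a finite group with rational coefficients vanishes, the right side is computed by the complex $\bigoplus_{[(G,\Phi)]}\bigl(\mathrm{or}(G,\Phi)\otimes M\bigr)^{\mathrm{Aut}(G,\Phi,\xi)}$ over orbit representatives, with differential induced by the face maps above. Each summand equals $M^{\mathrm{Aut}(G,\Phi,\xi)}$ when the automorphism group fixes the orientation line and is $0$ otherwise --- which is exactly the rule by which a forested graph is set to zero in $FG_n$. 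For $n=0$ the orientation line of the cube $(G,\Phi)$ is $\det\mathbb{Q}^{\Phi}$, i.e. an ordering of the marked edges, so taking $M=\mathbb{Q}$ gives statement (1) directly.

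The hard part will be the orientation bookkeeping for $n=1$: one must produce an $\mathrm{Aut}(G,\Phi,\xi)$-equivariant isomorphism
$$\det\mathbb{Q}^{V\cup(E\setminus\Phi)\cup HE}\;\cong\;\det\mathbb{Q}^{\Phi}\otimes\det H_1(G;\mathbb{Q})$$
and check that the resulting identification of graded orientation lines is a chain map for $\delta_s+\delta_m$. Since the honest hairs, their endpoints, and their half-edges are all fixed by $\mathrm{Aut}(G,\Phi,\xi)$ and invisible to $H_1(G)$, one may discard them and work with interior data. One then combines (a) the equivariant identity $\det\mathbb{Q}^{E}_{\mathrm{or}}\cong\det\mathbb{Q}^{E}\otimes\det\mathbb{Q}^{HE}$, where $\mathbb{Q}^{E}_{\mathrm{or}}$ is the edge module on which an automorphism reversing an edge acts by $-1$ (reversing an edge is transposing its two half-edges, while an edge permutation with no reversals contributes equal signs on the two sides), with (b) the short exact sequence of rational cellular chains
$$0\to H_1(G)\to\mathbb{Q}^{E}_{\mathrm{or}}\xrightarrow{\ \partial\ }\mathbb{Q}^{V}\to\mathbb{Q}\to 0,$$
which yields $\det\mathbb{Q}^{E}_{\mathrm{or}}\cong\det\mathbb{Q}^{V}\otimes\det H_1(G)$; using that order-two characters are self-inverse and splitting $\det\mathbb{Q}^{\Phi}$ off $\det\mathbb{Q}^{E^{int}}$ gives the displayed isomorphism. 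Since $\det H_1(G)$ is precisely the restriction to $\Gamma_{g,r}$ of the determinant of the defining representation on $H_1(F_g;\mathbb{Q})\cong\mathbb{Q}^{g}$, i.e. the sign representation, taking $M=sgn$ gives statement (2). I expect (a), the compatible tracking of signs in (b), and the verification that the induced map of orientation lines commutes with the two face moves to be the only genuinely technical points; the contractibility of $\mathcal{O}_{g,r}$ and the finite-stabiliser spectral sequence are quoted wholesale.
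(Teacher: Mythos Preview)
The paper does not actually prove this proposition: it is stated with a \qed\ and immediately attributed to \cite[Theorem 2.5]{BrunWillwacher} and, for part (i), to \cite{ContantKassabovVogtmann,ContantVogtmann}. Your proposal is therefore not competing with any argument in the paper but rather sketching the standard proof from those references.

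As a sketch of that standard proof, your outline is correct and well-organised. A few remarks. First, the spine $K_{g,r}$ is literally a simplicial complex (a subcomplex of the nerve of the poset of open simplices of $\mathcal{O}_{g,r}$); the cubical description you give is the reorganisation due to Conant--Vogtmann in which a forested graph $(G,\Phi)$ indexes the union of the simplices corresponding to chains of collapses inside $\Phi$, and one should say this rather than call the $(G,\Phi)$ ``cubes of $K_{g,r}$''. Second, your orientation computation for $n=1$ is right: combining $\det\mathbb{Q}^{E}_{\mathrm{or}}\cong\det\mathbb{Q}^{E}\otimes\det\mathbb{Q}^{HE}$ with the determinant of the cellular chain sequence and splitting off $\det\mathbb{Q}^{\Phi}$ from $\det\mathbb{Q}^{E}$ does yield $\det\mathbb{Q}^{V\cup(E\setminus\Phi)\cup HE}\cong\det\mathbb{Q}^{\Phi}\otimes\det H_1(G;\mathbb{Q})$ equivariantly, and the identification of $\det H_1$ with $sgn$ is exactly the determinant of the $GL_g(\mathbb{Z})$ action. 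Third, the step you flag as ``genuinely technical''---checking that the orientation isomorphism intertwines the two face moves with $\delta_s$ and $\delta_m$---is real work but routine; in the cited papers it is done by fixing explicit sign conventions and verifying on a single edge. Nothing in your plan is wrong or missing.
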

This result can be found in \cite[Theorem 2.5]{BrunWillwacher}. The results in (i) are proven by Contant, Kassabov, and Vogtmann for $r=0$ in \cite{ContantKassabovVogtmann} and by J. Contant and K. Vogtmann for general $r$ in \cite{ContantVogtmann}. In \cite{BrunWillwacher} the homologies are computed for a low number of marked edges, hairs, and loop order.

 In the following we construct a surjective projections $$H(HG_n(r,k)) \to H(HG_n(r-l,k+l)[-l]) $$ and $$H(HG_n(r,k)) \to H(HG_n(r-l,k+l)[-l])$$
 for $l\leq r$. Firstly, there is a natural projection $$\pi: MG_n(r,k) \to MG_n(r-l,k+l)[-l]$$
that maps any graph $G$ to the corresponding graph where the last $l$ honest hairs are marked and the orientation is given by $$or(\pi G) = h_{r-l+1}\wedge ...\wedge h_r\wedge or(G)$$
for $n=0$ and by $$or(\pi G) = d_{h_{r-l+1}}...d_{h_r} or(G)$$
for $n=1$,
where $h_1,...,h_r$ is the ordering of the hairs of $G$. This projection commutes with the differentials and induces projections on hairy graphs and forested graphs. For $l=r 1$ and $k=0$ the map $\pi$ is bijective, in particular for $$H(HG_n(1,0),\delta_s) = H(HG_n(0,1),\delta_s)[-1])$$
and $$H(FG_n(1,0),\delta_s+\delta_m) = H(FG_n(0,1),\delta_s+\delta_m)[-1]).$$
Moreover, for any $r\in \mathbb Z_{\geq 0}$ the projection $\pi$ induces a subjective projection in homology. We can also define a map in the other direction $$\iota:MG_n(r,k)\to MG_n(r+l,k-l)[l].$$
For an element $(G,M,or(G))$ in $MG_n$ pick any graph representing the equivalence class. We unmark $l$ dishonest hairs and add them at the end of the ordering of the honest hairs. If we unmark $l$ of $k$ dishonest hairs, then there are $\frac{r!}{(r-l)!}$ ways to enumerate $l$ dishonest hairs. Let $h = (h_{r+1},...,h_{r+l})$ denote a possible enumeration. Then for $n=0$  we define $$\iota (G,M,or(G)) = \frac{(r-l)!}{ r!}\sum_{h} (G,M-\{h_{r+1},...,h_{r+l}\},d_{h_{r+l}}...d_{h_{r+1}}or(G))$$
and for $n=1$ we define $$\iota (G,M,or(G)) = \frac{(r-l)!}{ r!}\sum_{h} (G,M-\{h_{r+1},...,h_{r+l}\},h_{r+l}\wedge ...\wedge h_{r+1}\wedge or(G)).$$
The map $\iota$ is a chain map and $\pi\circ\iota = \text{id}_{MG_n}$.

\begin{prop}\label{Prop AutoGroups}
    For $l\in \mathbb Z_{\geq 0}$ there is an inclusion $$H(HG_n(l),\delta_s) \hookrightarrow H(OHG_n(l)[l],\delta_s)$$
    and $$H(FG_n(l),\delta_s+\delta_m) \hookrightarrow H(OFG_n(l)[l],\delta_s+\delta_m).$$
    The induced map $\pi_*$ is the projection to these subspaces.\qed
\end{prop}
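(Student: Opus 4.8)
The plan is to observe that the two asserted inclusions are precisely the maps induced on homology by the chain map $\iota$ constructed in the preceding paragraph, and that the retraction $\pi_*$ is the map induced by $\pi$; everything then reduces to the elementary fact that a chain map possessing a chain-level left inverse induces a split injection on homology, with the retraction being the complementary projection. First I would specialize the constructions above: let $\iota\colon HG_n(l)\to OHG_n(l)[l]$ be the map that unmarks the $l$ dishonest hairs and appends them to the ordering of honest hairs — averaged over the $l!$ enumerations and with the orientation signs fixed above — and let $\pi\colon OHG_n(l)[l]\to HG_n(l)$ be the map that marks those $l$ ordered hairs again. As already recorded, both restrict from $MG_n$ to the hairy and to the forested complexes: for $HG_n$ because marking or unmarking a hair preserves the property ``every edge that is not an honest hair is marked'', and for $FG_n$ because hairs are leaves, so these operations leave $H_1(M;\mathbb Q)$ unchanged and hence preserve $CMG_n$ and descend to the quotient. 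They are chain maps for $\delta_s$ (resp. $\delta_s+\delta_m$), and $\pi\circ\iota=\mathrm{id}$.

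Next I would run the splitting argument. Put $e=\iota\circ\pi$, a chain endomorphism of $OHG_n(l)[l]$; then $e^2=\iota\,(\pi\circ\iota)\,\pi=\iota\circ\pi=e$, so $e$ is idempotent, and as complexes
\[
OHG_n(l)[l]=\mathrm{im}(e)\oplus\ker(e).
\]
Since $\pi\circ\iota=\mathrm{id}$ one has $\mathrm{im}(\iota)=\mathrm{im}(e)$, the maps $\iota$ and $\pi|_{\mathrm{im}(e)}$ are mutually inverse isomorphisms of complexes between $HG_n(l)$ and $\mathrm{im}(e)$, and $\pi$ vanishes on $\ker(e)$. Applying the (additive) homology functor then yields
\[
H(OHG_n(l)[l],\delta_s)=\iota_*\,H(HG_n(l),\delta_s)\ \oplus\ \ker(\pi_*),
\]
so $\iota_*$ is injective with left inverse $\pi_*$, and, identifying $H(HG_n(l),\delta_s)$ with its image $\iota_*H(HG_n(l),\delta_s)$, the map $\pi_*$ is exactly the projection onto this summand along $\ker(\pi_*)$. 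The same argument with $\delta_s+\delta_m$ replacing $\delta_s$ and $FG_n(l)\subset OFG_n(l)[l]$ gives the forested statement.

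The only part that is not pure formalism is the input, asserted above, that $\iota$ is a chain map and is well defined on the forested quotient. Verifying it amounts to checking that unmarking the $l$ chosen dishonest hairs commutes, up to the prescribed bookkeeping signs, with splitting a vertex and with marking an interior or honest edge; this is routine, since $\delta_s$ never involves a hair and $\delta_m$ only ever marks an already-unmarked honest hair — an operation on a set of edges disjoint from the dishonest hairs being unmarked — so the main point is simply to match the averaging factor over the $l!$ enumerations with the orientation conventions. I expect this sign bookkeeping to be the only place requiring care; granting it, there is no further obstacle, as the remainder is the standard statement that a split monomorphism of chain complexes remains a split monomorphism after passing to homology.
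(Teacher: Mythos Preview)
Your argument is correct and is exactly the approach the paper intends: the proposition carries only a \qed\ because the preceding paragraph already records that $\iota$ is a chain map with $\pi\circ\iota=\mathrm{id}$, and your idempotent splitting makes explicit the standard consequence that $\iota_*$ is a split monomorphism with retraction $\pi_*$. One small slip to fix: $\delta_m$ does \emph{not} mark honest hairs---by definition it sums over $e\in E\setminus(M\cup HR)$, i.e.\ over unmarked interior edges---but your conclusion that $\delta_m$ acts on edges disjoint from the dishonest hairs touched by $\iota$ is still correct, so the commutation check goes through unchanged.
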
 
More conceptually, there are two actions of the symmetric group $S_l$ on $H(HG_n(r,k))$ and $H(FG_n(r,k)$ respectively that reorder the last $l$ honest hairs, where one of the actions adds an additional sign of the permutation. The first action corresponds to forgetting the ordering of the last $l$ honest hairs, and the second action checks if there are orientation-inverting graph self-isomorphisms. Taking equivalence classes induced by both actions gives the projection $\pi_*$.
\section*{Appendix B}
The following tries to imitate Theorem \ref{Thm FG} and Theorem \ref{Thm HG} but is less precise in the sense that the limit of the spectral sequence could still be complicated. Note that $\delta_{v,v}+\delta_{v,e}+\delta_{e,e}$ is defined in section \ref{Section Connecting} and $\delta_{q}+\delta_{q,e}$ in section \ref{Section Quasihair}.
\begin{lemma}\label{lem spec seq}
    Let $C$ be the complex $HG_0(r,k)$ or $FG_n(r,k)$ for $r,k\in\mathbb Z_{\geq 0}$ and $D$ be the differential $\delta_{v,v}+\delta_{v,e}+\delta_{e,e}$ or $\delta_{q}+\delta_{q,e}$. There exists a spectral sequence with $$E^1 = H(C,\delta_s+\delta_m)$$
    that converges to $$H(C,\delta_s+\delta_m+D).$$
\end{lemma}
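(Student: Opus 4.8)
The plan is to mimic the proof of Proposition \ref{prop spec seq} essentially verbatim, now that the two differentials $D = \delta_{v,v}+\delta_{v,e}+\delta_{e,e}$ and $D = \delta_q+\delta_{q,e}$ have been shown (in sections \ref{Section Connecting} and \ref{Section Quasihair}) to anticommute with $\delta_s+\delta_m$ on $MG_n$, hence on the subquotients $HG_0(r,k)$ and $FG_n(r,k)$. First I would observe that all of $\delta_s$, $\delta_m$, and $D$ strictly increase the number of hairs by $0$ or $1$ — more precisely $\delta_s$ and $\delta_m$ preserve the hair count while $D$ (in either case) raises it by exactly one dishonest hair (the new hair being the dishonest leaf of the attached edge, or the univalent vertex of the quasihair $S$). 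Therefore, exactly as in Proposition \ref{prop spec seq}, the subspaces $C^{\geq j}$ spanned by series of graphs with at least $j$ hairs form a descending filtration $C = C^{\geq 0}\supset C^{\geq 1}\supset\cdots$ of the complex $(C,\delta_s+\delta_m+D)$, with associated graded differential $\delta_s+\delta_m$.

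Next I would set up the spectral sequence associated to this filtration. Its first page is $E^1 = H(\mathrm{gr}\,C,\delta_s+\delta_m) = H(C,\delta_s+\delta_m)$, since $\delta_s+\delta_m$ preserves each $C^{\geq j}$ so the associated graded complex is just $C$ itself with the hair grading. The remaining point is convergence: I would invoke Proposition \ref{prop convergence} (used the same way in Propositions \ref{Thm CMG} and \ref{prop spec seq}) after checking the relevant boundedness hypothesis. Here the key is that within the fixed subquotient $C = HG_0(r,k)$ or $FG_n(r,k)$ the total number of hairs is bounded — it is at most $r+k$ plus whatever $D$ adds, but each application of $D$ consumes degree, so in any fixed homological degree only finitely many filtration steps are nonzero. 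Concretely, $\deg(G)$ (number of interior marked edges, resp. of edges) dominates the number of hairs that can have been produced by $D$, so the filtration is bounded in each degree and Proposition \ref{prop convergence} applies, giving convergence to $H(C,\delta_s+\delta_m+D)$.

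The main obstacle I anticipate is the bookkeeping around the hair count in the two cases, since the graph complexes $HG_0(r,k)$ and $FG_n(r,k)$ are fixed-hair-number pieces, and one must be careful that $D$ actually lands back in the same family $C$ (only the number of dishonest hairs $k$ changes, and these pieces are assembled into $C$ exactly as in the decomposition $MG_n = \prod C(r,k)$ discussed after Theorem \ref{Thm HG}). One must verify that $D$ restricted to $HG_0$ and to $FG_n$ is well-defined — this was already noted in sections \ref{Section Connecting} and \ref{Section Quasihair} (the map $D$ descends to $FG_0$, $HG_0$, and to $FG_1$ in the quasihair-free cases) — so the only genuine content left is the convergence estimate, which is routine given Proposition \ref{prop convergence}. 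I would therefore present the proof in three short steps: (1) $D$ raises the hair count by one, so $C^{\geq j}$ is a filtration by subcomplexes; (2) the $E^1$-page is $H(C,\delta_s+\delta_m)$; (3) $\deg \geq (\text{hairs added by }D)$ gives boundedness in each degree, so Proposition \ref{prop convergence} forces convergence to $H(C,\delta_s+\delta_m+D)$.
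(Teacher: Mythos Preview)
Your proposal rests on the claim that $D$ increases the hair count by one, and this is false for both choices of $D$. The map $\delta_{v,v}+\delta_{v,e}+\delta_{e,e}$ from section~\ref{Section Connecting} adds a new \emph{interior} marked edge between two internal vertices (or between a vertex and a point on an unmarked edge, or between two such points); no leaf is created. Likewise the quasihair $S$ from section~\ref{Section Quasihair} is a marked edge with a tadpole at one end: its unique univalent vertex is the one that gets glued to the graph, so after attaching $S$ the new vertex is a trivalent quasileaf, not a leaf, and no hair has been produced. In both cases $D$ preserves the number of honest and dishonest hairs. This is consistent with the fact that $C=HG_0(r,k)$ or $C=FG_n(r,k)$ is the piece with \emph{fixed} hair counts $(r,k)$: if $D$ really changed the number of hairs it could not be a differential on $C$ at all. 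Consequently your filtration $C^{\ge j}$ by number of hairs is the trivial one-step filtration $C=C^{\ge r+k}\supset 0$, and the associated spectral sequence degenerates with $E^1=H(C,\delta_s+\delta_m+D)$, not $H(C,\delta_s+\delta_m)$.

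What $D$ does change is the loop order: adding an interior edge between two existing points of the graph, or attaching a tadpole, raises the first Betti number by one, while $\delta_s$ and $\delta_m$ leave it unchanged. The paper therefore filters $C$ by loop order $\ge k$; the associated graded differential is $\delta_s+\delta_m$, giving the stated $E^1$-page. Because this filtration is not bounded in each degree, Proposition~\ref{prop convergence} does not directly apply; instead the paper invokes the Complete Convergence Theorem \cite[5.5.10]{Weibel} for complete filtrations. Your step~(3) would need to be replaced by this argument even after correcting the filtration.
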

\begin{proof}
The subcomplexes of $C$ spanned by series of graphs with looporder $\geq k$ for some $k\in\mathbb Z_{\geq 1}$ define a complete filtration.  One can set up a spectral sequence associated with this filtration that converges to the homology $H(C,\delta_s+\delta_m+D)$ \cite[Complete Convergence Theorem 5.5.10]{Weibel}. The first page of this spectral sequence is given by $$E^1 = H(C,\delta_s+\delta_m).$$
\end{proof}

Although the homologies $H(C,\delta_s+\delta_m+D)$ are probably still complicated, it might be fruitful to investigate them.
\section*{Appendix C}
We will very briefly discuss the results we used for the convergence of the spectral sequence. All the constructions and convergence results can be found in \cite[5 Spectral Sequence]{Weibel}. For a filtration $F^kC$ of a complex, $C = (C^\bullet,\delta)$ the 0-th page of the spectral sequences is given by $$E_0 ^{n,k} = \faktor{F^kC^n}{F^{k+1}C^n}\quad.$$
\begin{prop}\label{prop convergence}
    If for any $k$ all but finitely many groups $E_0^{n,k}$ are trivial, then the spectral sequence converges to the homology $H(C,\delta)$.
\end{prop}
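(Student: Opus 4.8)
The plan is to invoke the standard machinery of filtered complexes, recalling just enough of the construction to see where the finiteness hypothesis enters. First I would fix the filtration $F^\bullet C$ and recall the usual recursive definition of the pages: $E_r^{n,k}$ is the subquotient of $E_0^{n,k} = F^kC^n/F^{k+1}C^n$ consisting of those classes represented by cycles modulo $F^{k+r}$, modulo the boundaries coming from $F^{k-r+1}$. The associated graded of the homology, $\mathrm{gr}^k H^n(C,\delta)$, is $F^kH^n/F^{k+1}H^n$ where $F^kH^n$ denotes the image of $H^n(F^kC)\to H^n(C)$. Convergence in the sense we want means two things: (a) the spectral sequence \emph{abuts}, i.e. $E_\infty^{n,k}\cong \mathrm{gr}^k H^n(C,\delta)$ for all $n,k$, and (b) the filtration on $H(C,\delta)$ is exhaustive and Hausdorff (complete), so that $H(C,\delta)$ is genuinely recovered from the associated graded $\prod_k \mathrm{gr}^k H(C,\delta)$.

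The key observation is that the hypothesis ``for each fixed $k$, all but finitely many $E_0^{n,k}$ are trivial'' forces the filtration to be \emph{bounded on each row}. Concretely, for fixed $k$ there are integers $a(k)\le b(k)$ with $E_0^{n,k}=0$ for $n\notin[a(k),b(k)]$; equivalently $F^kC^n=F^{k+1}C^n$ outside that range. I would then argue degreewise: fix a total degree $n$. Since each individual graph in our complexes has some finite filtration degree (in all applications $\deg \ge \omega$, so the filtration degree is bounded above by the total degree), and since for each $k$ only finitely many $n$ contribute, the filtration of $C^n$ is automatically finite — there are only finitely many $k$ with $F^kC^n\ne F^{k+1}C^n$. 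Wait, that is not quite immediate from the stated hypothesis alone; the honest route is to note that a filtration for which every $E_0^{n,k}$ with $n$ fixed is eventually zero in $k$, and for which (as holds in every application in this paper) the filtration is bounded below, is a filtration to which \cite[Theorem 5.5.1]{Weibel} applies: bounded filtrations yield convergent spectral sequences. So the real content is reducing the stated hypothesis to boundedness in each total degree.

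Thus the proof structure is: (1) recall the construction of $E_r$ and the meaning of convergence; (2) show the hypothesis implies that, in each total degree $n$, only finitely many filtration steps are nontrivial, hence the filtration restricted to $C^n$ is bounded; (3) quote \cite[Theorem 5.5.1]{Weibel} (the classical convergence theorem for bounded filtrations) to conclude $E_\infty \cong \mathrm{gr}\,H(C,\delta)$ with a finite, exhaustive, Hausdorff filtration, so the spectral sequence converges to $H(C,\delta)$. The main obstacle is step (2): the stated hypothesis is ``for each $k$, finitely many $E_0^{n,k}$ are nonzero'', whereas boundedness of the filtration in each degree $n$ is the ``transposed'' statement. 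I expect one needs the additional structural input, available in all the paper's applications, that the filtration degree of any graph is bounded above by its total degree (the inequalities $\deg(G)\ge\omega(G)$ invoked in the proofs of Propositions \ref{Thm CMG} and \ref{prop second hom}), together with the filtration being bounded below by $0$; with those, step (2) is a short pigeonhole argument and the rest is a citation.
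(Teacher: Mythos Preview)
Your approach is exactly the paper's: assert that the hypothesis makes the filtration bounded in each total degree and then cite \cite[Classical Convergence Theorem 5.5.1]{Weibel}. The paper's entire proof is two sentences to that effect, with no further elaboration.

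Your hesitation in step (2) is well taken and in fact more careful than the paper. As literally stated, ``for each $k$ only finitely many $E_0^{n,k}$ are nonzero'' is row-finiteness, whereas boundedness of the filtration is the transposed column-finiteness statement (for each $n$, only finitely many $k$). The paper simply writes ``We have that the filtration is bounded'' without addressing this; in every application the gap is closed exactly as you say, by the inequality $\deg(G)\ge \omega(G)$ (or $\deg(G)\ge \#\text{hairs}$, etc.) together with the filtration starting at $k=0$. So your reading of the proposition as implicitly packaging those structural facts is the right one, and your proof plan is both correct and slightly more honest than the paper's own argument.
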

\begin{proof}
We have that the filtration is bounded. Then $\cite[\text{Classical Convergence 5.5.1}]{Weibel}$ implies the result.
\end{proof}
\section*{Acknowledgement}
\addcontentsline{toc}{section}{\protect\numberline{}Acknowledgement}
This paper evolved from a semester thesis I wrote at ETH Zürich under the supervision of Thomas Wilwacher. I thank him for helpful, insightful, and confusion-resolving discussions.


\begin{thebibliography}{9}
\bibitem{DGC}
A. Khoroshkin and T. Willwacher and M. Živković: \emph{Differential on Graph Complexes}.
Advances in Mathematics Vol. 307 (2017), pp. 1184-1214.
\bibitem{Weibel}
C. A. Weibel: \emph{An Introduction to Homological Algebra}.
Cambridge University Press (1994).
\bibitem{Kont}
M. Kontsevich: \emph{Formal (non)commutative symplectic geometry}. Proceedings of the I.M. Gelfands Seminar 1990-1992 Birkhäuser (1993), pp. 173-188.
\bibitem{BrunWillwacher}
S. Brunn and T. Willwacher: \emph{Graph Homology Computations}. New York J. Math. 30 (2024), pp. 58-92.
\bibitem{ContantKassabovVogtmann}
J. Contant, M. Kassabov, and K. Vogtmann: \emph{Hairy graphs and the unstable homology of $Mod(g,s), Out(F_n),$ and $Aut(F_n)$}. Journal of Topology 6 (2011).
\bibitem{ContantVogtmann}
J. Contant and K. Vogtmann: \emph{On a theorem of Kontsevich}. Algebraic \& Geometric Topology 3 (2003), pp. 1167-1224.
\bibitem{HatcherVogtmann}
A. Hatcher and K. Vogtmann: \emph{Homology stability for outer automorphism
groups of free groups}. Algebr. Geom. Topol. 4 (2004), pp. 1253–1272.
\bibitem{ChanGalatiusSørenPayne}
M. Chan, Galatius, Søren, and S. Payne: \emph{Topology of moduli spaces of tropical
curves with marked points}. Facets of algebraic geometry Vol I, pp. 77–131.
\bibitem{DGC2}
A. Khoroshkin, T. Khoroshkin, and M. Khoroshkin: \emph{Differentials on graph complexes II: hairy graphs}. Letters in Mathematical Physics Vol. 107 (2017), pp. 1781-1797.
\bibitem{TurchinWillwacher1}
 V. Turchin, T. Willwacher: \emph{Relative (non-) formality of the little cubes operads and the algebraic Cerf
lemma}. arXiv:1409.0163 (2014).
\bibitem{TurchinWillwacher2}
V. Turchin, T. Willwacher: \emph{Commutative hairy graphs and representations of $Out(Fr)$}.
arxiv:1603.08855 (2016).
\bibitem{Willwacher}
T. Willwacher: \emph{M. Kontsevich’s graph complex and the Grothendieck–Teichmüller Lie algebra}.
Invent. Math. 200(3) (2015), pp. 671–760. 
\end{thebibliography}
\end{document}